\documentclass[12pt,reqno]{amsart}
\numberwithin{equation}{section}
\usepackage[a4paper,margin=1in,footskip=0.25in]{geometry}

\usepackage{tikz}
\usetikzlibrary{calc,through,backgrounds}
\usepackage{multicol}

\usepackage{bm,amssymb,amsthm,amsfonts,amsmath,ytableau,latexsym,cite,color, psfrag,graphicx,ifpdf,pgfkeys,pgfopts,xcolor,extarrows}
\usepackage{graphicx}
\usepackage{subcaption}
\usepackage{amscd,amssymb,amsmath,amsbsy,amsthm}
    \definecolor{linkred}{rgb}{0.7,0.2,0.2}
    \definecolor{linkblue}{rgb}{0,0.2,0.6}
    \definecolor{linkgreen}{rgb}{0,0.6,0.2}
\usepackage[colorlinks,backref,
    linkcolor=linkblue,
    citecolor=linkgreen,
    urlcolor=linkred]{hyperref}

\newtheorem{thm}{Theorem}[section]
\newtheorem{prop}[thm]{Proposition}

\newtheorem{lem}[thm]{Lemma}
\newtheorem{cor}[thm]{Corollary}

\theoremstyle{definition}

\newtheorem{defn}[thm]{Definition}
\newtheorem{remark}[thm]{Remark}

\allowdisplaybreaks

\def\S{\mathfrak{S}}

\def\v{\text{\bf\textit{v}}}
\def\p{\text{\bf\textit{p}}}

\def\e{\text{\bf\textit{e}}}
\def\t{\text{\bf\textit{t}}}

\def\Newton{  \mathrm{Newton}}

\def\supp{  \mathrm{supp}}

\def\S{   \mathfrak{S}  }
\def\G{   \mathfrak{G}  }

\begin{document}

\author{Jinren Dou}
\address{Department of Mathematics, 
Sichuan University, Chengdu, Sichuan 610065, P.R. China}
\email{doujinren@stu.scu.edu.cn, fan@scu.edu.cn, kw\_liu@stu.scu.edu.cn}
\author{Neil J.Y. Fan}
\author{Kunwen Liu}

\title{Lattice-free Schubitopes}
 
\begin{abstract}
In  this paper, we provide a simple criterion  for the Schubitope $\mathcal{S}_{D}$ associated to a diagram $D$  to be  lattice-free. We further show that   $\mathcal{S}_{D}$   is lattice-free if and only if its Ehrhart polynomial is equal to the product of Ehrhart polynomials of the Schubert matroid polytopes corresponding to each column of $D$.  
As applications, we obtain that the Newton polytopes of the Schubert polynomial $\S_w(x)$ and the Grothendieck polynomial $\mathfrak{G}_w(x)$ are lattice-free if and only if 
$w$ avoids the  patterns 1423, 1432, 13254, and confirm several   conjectures by M\'esz\'aros, Setiabrata, and St.\,Dizier on the support of Grothendieck polynomials for this class of permutations. 
\end{abstract}

\maketitle

\section{Introduction}
 Given a $d$-dimensional polytope $P \subset \mathbb{R}^{d}$, let $\mathrm{vert}(P)$ denote its vertex set. If  $\mathrm{vert}(P)\subset \mathbb{Z}^{d}$, then $P$ is called an {\it integral} polytope. If $P\cap \mathbb{Z}^{d}= \mathrm{vert }(P)$, then $P$ is called a \emph{lattice-free} polytope or \emph{empty} polytope \cite{kan,Deza,JMK}. If the interior of 
$P$ contains no lattice points but its boundary might, then   $P$ is called a \emph{hollow} polytope \cite{Zieg}.
Klain \cite{D.A.K}, Manecke and Sanyal \cite{S.Manecke}  studied the enumeration of lattice-free polytopes in the dilation $nP$ of a polytope $P$. Hoffman\cite{AJ}, Arun and Dillon \cite{SA} investigated the vertex enumeration problem for lattice-free polytopes. Furthermore, lattice-free polytopes play special roles in many fields, such as, geometry of numbers\cite{R. K}, integer programming \cite{R. K, HEScarf}, singularities in algebraic geometry \cite{JM,T. Oda}, and so on.

Monical, Tokcan, and Yong  \cite{CNY} initiated the study of  Newton polytopes of many important polynomials in algebraic combinatorics, and introduced the notion of saturated Newton polytopes. For any diagram 
$D$ (a subset of the $n\times n$ grid), they defined a class of generalized permutohedra $\mathcal{S}_{D}$, called   {\it Schubitopes}, 
 and conjectured that the Ehrhart polynomial of 
 $\mathcal{S}_{D}$ has positive coefficients. 
Fink, Mészáros, and St.\,Dizier \cite{FMD} showed that the Schubitope $\mathcal{S}_{D}$ is in fact the Newton polytope of the dual character of the flag Weyl module associated to $D$. In particular, they proved that both the  Newton polytopes of Schubert polynomials and key polynomials are saturated, and are also Schubitopes associated to the Rothe diagram of permutations and skyline diagram of compositions, respectively.  
Recently, Mészáros, Setiabrata, and St.\,Dizier \cite{MSSD} systematically studied the Newton polytopes of Grothendieck polynomials, raising several conjectures regarding their support and saturatedness. 

In  this paper, we  provide a concise criterion for $\mathcal{S}_{D}$ to be a lattice-free polytope, and the Ehrhart polynomial of $\mathcal{S}_{D}$ to be equal to the product of  Ehrhart polynomials of the Schubert matroid polytopes corresponding to each column of $D$.

For a $d$-dimensional  polytope 
$P$,  denote  $i(P, t)$ by the number of lattice points in its  dilated polytope
$tP$, where $t$ is a nonnegative integer, i.e., $i(P, t)=|tP\cap \mathbb{Z}^{d}|.$ By Ehrhart \cite{Ehrhart}, if $P$ is integral, then $i(P,t)$ is a polynomial in $t$ of degree $d$, called the {\it Ehrhart polynomial} of $P$. 
%By convention, the Ehrhart polynomial of the empty set is defined to be 1.

A {\it diagram} $D$ is a collection of  boxes in an $n\times n$ grid, where the boxes are labeled similarly to matrix entries: $(i,j)$ represents the box in the $i$-th row from the top and the $j$-th column from the left. In this paper, we view $D$ as an ordered list of subsets of $[n]:=\{1,2,\ldots,n\}$, i.e., $D=(D_{1}, D_{2},\cdots,D_{n})$, where $D_{j}=\{i:(i,j)\in D\}\subseteq [n]$ is the set of row indices of the boxes in the $j$-th column of $D$. 
The {\it flag Weyl module $\mathcal{M}_{D}$} of the diagram $D$ is a module of the group of invertible upper triangular matrices \cite{WP,WK2,PM}. The {\it dual character of $\mathcal{M}_{D}$}, denoted $\chi_{D}(x)$, is a polynomial in the variables $x_{1},x_{2},\ldots,x_{n}$.

For each column $D_{j}$ of a diagram $D$, 
let $d_j=\max (D_j)$ if $D_j\neq \emptyset$,  and $d_j=0$ otherwise. Define the {\it movable interval} of $D_j$, denoted  $M(D_j)$,  as the interval from the  row index of the  topmost position without a box to $d_j,$ i.e., to the bottommost position with a box in $D_j$. If $D_j=\emptyset$ or $D_j=[d_j]$, then let $M(D_j)=\emptyset.$ For example, in Figure \ref{fig:movable interval}, $M(D_1)=\{2,3\}$, $M(D_2)=\{1,2,3\}$, $M(D_3)=\emptyset$. Here,
\begin{tikzpicture}
    \draw[fill=gray](0mm, 0mm)--(4mm, 0mm)--(4mm, 4mm)--(0mm, 4mm)--(0mm, 0mm);
    \end{tikzpicture}
indicates this position has a box; and \begin{tikzpicture}
    \draw (0mm, 0mm)--(4mm, 0mm)--(4mm, 4mm)--(0mm, 4mm)--(0mm, 0mm);
    \draw (0mm, 4mm)--(4mm, 0mm);
    \draw (0mm, 0mm)--(4mm, 4mm);
    \end{tikzpicture}
indicates this position  has no box.

\begin{figure}[hhhhh]
\begin{center}
    \begin{tikzpicture}
    [scale=0.7]
    \draw[very thick](0mm, 0mm)--(24mm, 0mm)--(24mm, 24mm)--(0mm, 24mm)--(0mm, 0mm);
    \draw[fill=gray] (0mm, 0mm)--(8mm,0mm)--(8mm, 8mm)--(0mm, 8mm);
     \draw [fill=gray] (8mm, 0mm)--(16mm,0mm)--(16mm, 8mm)--(8mm, 8mm);
     \draw [fill=gray] (8mm, 8mm)--(16mm,8mm)--(16mm, 16mm)--(8mm, 16mm);
    \draw [fill=gray] (0mm, 16mm)--(8mm, 16mm)--(8mm, 24mm)--(0mm, 24mm);
   \draw [fill=gray] (16mm, 16mm)--(24mm, 16mm)--(24mm, 24mm)--(16mm, 24mm);
   \draw (0mm, 8mm)--(16mm, 24mm);
  \draw (0mm, 16mm)--(8mm, 8mm);
  \draw (8mm, 24mm)--(16mm, 16mm);
  \draw(16mm, 16mm)--(24mm, 8mm);
    \draw (16mm, 8mm)--(24mm, 0mm);
    \draw (24mm, 16mm)--(16mm, 8mm);
   \draw (16mm, 8mm)--(24mm, 8mm)--(16mm, 0mm);
   \draw [step=8mm] (0mm, 0mm) grid (24mm, 24mm);
    \end{tikzpicture}
    \end{center}
\vspace{-2mm}
\caption{$D=(\{1,3\}, \{2 , 3\} ,\{1 \})$}
\label{fig:movable interval}
\end{figure}

\begin{thm}\label{thm: the main thm}
 For any diagram $D=(D_1,\ldots,D_n)\subseteq[n]^2$, the following  conditions are equivalent:
\begin{enumerate}
\item[(1)] The Schubitope $\mathcal{S}_{D}$ is  lattice-free;
\item[(2)] The movable intervals  of different columns of  $D$ intersect in at most one element, i.e., for any $1\le i<j\le n$, $|M(D_i)\cap M(D_j)|\le1$;
\item[(3)] The Ehrhart polynomial of the Schubitope $\mathcal{S}_D$  factors as
\begin{equation}
        i(\mathcal{S}_D, t) =\prod_{j=1}^{n} i(P(SM_{d_j}(D_{j})), t),
\end{equation}
where $P(SM_{d_j}(D_j))$ denotes the Schubert matroid polytope generated by $D_j$.
\end{enumerate}
\end{thm}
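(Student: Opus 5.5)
The proof rests on the known fact (Fink--Mészáros--St.\,Dizier) that the Schubitope is a Minkowski sum of Schubert matroid polytopes, one for each column:
\[
\mathcal{S}_D=\sum_{j=1}^n P(SM_{d_j}(D_j)).
\]
Here $P(SM_{d_j}(D_j))$ is the convex hull of the indicator vectors of sets $B\subseteq[d_j]$ with $|B|=|D_j|$ that dominate $D_j$ entrywise after sorting, i.e.\ obtained by moving boxes upward. Each summand is a $0/1$ polytope, hence lattice-free. The key observation is that the summand $P_j$ is a point exactly when $M(D_j)=\emptyset$. Otherwise it lives in the coordinates indexed by $M(D_j)$: coordinates above the first gap are forced to be $1$, and coordinates outside $[d_j]$ are forced to be $0$. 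Since these are integral polymatroids (generalized permutohedra), the integer decomposition property gives
\[
t\mathcal{S}_D\cap\mathbb{Z}^n=\sum_j (tP_j\cap\mathbb{Z}^n).
\]
I will use this throughout.

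\textbf{(2)$\Rightarrow$(1),(3).} Assume $|M(D_i)\cap M(D_j)|\le 1$ for all $i\neq j$, and form the \emph{intersection graph} on the nonempty intervals $M(D_j)$, with an edge when two intervals share a point. Because the sets are intervals pairwise sharing at most one point, this graph is a forest. I will show that the Minkowski sum map $\prod_j (tP_j\cap\mathbb{Z}^n)\to t\mathcal{S}_D\cap\mathbb{Z}^n$ is a bijection. Surjectivity is the integer decomposition property above. For injectivity, I peel off a leaf of the forest, say a column $j$ whose interval meets the union of the others in at most one coordinate $c$. On the coordinates of $M(D_j)\setminus\{c\}$ the lattice point of the sum determines the $P_j$-part directly. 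The $c$-coordinate of the $P_j$-part is then determined by the fixed coordinate sum $|D_j|$ of $P_j$. Inducting over leaves gives uniqueness, and this yields (3).

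For (1), take $t=1$. Every lattice point of $\mathcal{S}_D$ then decomposes uniquely as a sum of vertices $v_j$ of the $P_j$, since every lattice point of a $0/1$ polytope is a vertex. Let $\ell_j$ be a linear functional maximized on $P_j$ uniquely at $v_j$. The same leaf-peeling argument shows we can glue these into a single functional $\ell$ maximized uniquely at $\sum_j v_j$. The idea is to scale the $\ell_j$ so that they agree on the shared coordinate, perturbing along the leaf order; the tree structure means no cycle constraints obstruct this. Hence every lattice point of $\mathcal{S}_D$ is a vertex.

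\textbf{(1)$\Rightarrow$(2) and (3)$\Rightarrow$(2).} I argue by contraposition. Suppose $|M(D_i)\cap M(D_j)|\ge2$, with $a<b$ both in the intersection. First I find sets $B_i,B_j$ with indicator vectors $e_{B_i}\in P_i$ and $e_{B_j}\in P_j$ such that swapping $a$ and $b$ is legal in both: $B_i$ contains $a$ but not $b$, and $B'_i=B_i-a+b$ is also allowed, and similarly $B_j$ contains $b$ but not $a$, with $B_j-b+a$ allowed. Choosing these requires care, and it is the main obstacle. The plan is to use the structure of Schubert matroids: take the box positions of $D_j$ and move the boxes up appropriately, using that $a,b\in M(D_j)$ means both positions lie weakly below the first gap and at most $d_j$. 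Then
\[
e_{B_i}+e_{B_j}=e_{B_i-a+b}+e_{B_j-b+a},
\]
and both choices for the remaining columns give the same lattice point of $\mathcal{S}_D$. This point is the midpoint of two distinct points of $\mathcal{S}_D$, so it is not a vertex, contradicting (1). The same coincidence shows the sum map on lattice points is not injective at $t=1$. Hence
\[
i(\mathcal{S}_D,1)<\prod_j i(P_j,1),
\]
contradicting (3). Finally I must make sure the midpoint really differs from both points; to do this I choose the two points to differ in coordinates $a$ and $b$, using the fact that the fibres of the coordinate sums are determined column by column.
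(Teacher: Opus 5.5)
Your proposal is correct. The directions (1)$\Rightarrow$(2), (3)$\Rightarrow$(2) and (2)$\Rightarrow$(3) follow the paper's argument. Swapping $a<b$ in two columns produces a lattice point that is a midpoint and has two decompositions. Your ``main obstacle'' is mild: since $B-b+a\preceq B$ when $a<b$, you only need one basis of $SM_{d_j}(D_j)$ that contains $b$ but not $a$. Your leaf-peeling on the intersection forest is the paper's row-by-row determination of $\p_1,\p_2,\dots$ along a block. The paper's chain shape $[m_1,m_2],[m_2,m_3],\dots$ is also exactly why your graph is a forest: a row lying in three movable intervals of size $\ge 2$ would force two of them to share two rows.

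The genuine difference is in (2)$\Rightarrow$(1).

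\emph{The paper's route.} It argues by integrality. It writes a lattice point as $\sum_j\lambda_j\v_j$ with $0<\lambda_j<1$, where the $\v_j$ are vertices of $\mathcal S_D$. In each row of a block only one column contributes a $0/1$ unknown, so $\sum_j\lambda_j x_{j,d}\in\mathbb{Z}$ forces all $x_{j,d}$ to be equal. The constant basis size then carries this information across the shared row $m_{j+1}$.

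\emph{Your route.} You argue through normal cones: any tuple of vertices $v_j\in P_j$ has a common strictly maximizing functional. Hence every lattice point, being a sum of vertices by the mixed integer decomposition property, is a vertex. This is valid, but the freedom you need is translation, not scaling. $P_j$ is constant off $M(D_j)$ and has constant coordinate sum on $M(D_j)$. So $\ell_j$ matters only on $M(D_j)$, and only modulo $\mathbb{R}\mathbf{1}_{M(D_j)}$. Adding a suitable constant there matches $\ell_j$ to the value already chosen at the single shared row. Positive scaling alone fails when the two values at that row have opposite signs.

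\emph{What each buys.} Your route is more conceptual: it isolates why sharing one row is harmless, since only one scalar has to be reconciled, and it never uses uniqueness of decompositions. It does, however, rely on the mixed IDP even for (1). The paper's convex-combination argument avoids the IDP there and reuses the same coordinate bookkeeping as its Ehrhart proof.
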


Recently,  Li and St.\,Dizier \cite{LST} showed that Schubitopes are not Ehrhart positive in general, thus disproving  \cite[Conjecture 5.19]{CNY}. On the other hand, Ferroni, Morales, and  Panova \cite{FMP} showed that all lattice path matroid polytopes are Ehrhart positive. Since Schubert matroids are  special  lattice path matroids, we see that Schubert matroids  
 are also Ehrhart positive. Thus by Theorem \ref{thm: the main thm}, we obtain that lattice-free Schubitopes are Ehrhart positive.

Let $S_{n}$ denote the symmetric group on $[n]$. For a permutation $w=w(1)w(2)\cdots w(n)\in S_n$, denote its Rothe diagram by $D(w)$.
A permutation $w\in S_n$ is said to contain the permutation pattern $\tau=\tau(1)\tau(2)\dots \tau(k)(k\le n)$ if there exist $1\leq i_1<i_2<\cdots <i_k \leq n$ such that $w(i_1),w(i_2),\ldots,w(i_k)$ have the same relative order as $\tau(1),\tau(2),\ldots,\tau(k)$. If $w$ does not contain   $\tau$, we say $w$  {\it avoids} $\tau$.
A {\it composition} is a sequence of nonnegative integers. For a composition $\alpha$, denote its skyline diagram by $D(\alpha)$.

When the diagram $D$ is the Rothe diagram $D(w)$ of a permutation $w\in S_{n}$, the dual character $\chi_{D}$ of the flag Weyl module associated to $D$  is the Schubert polynomial $\S_{w}(x)$ \cite{WP}; when   $D$ is the skyline diagram $D(\alpha)$ of a composition $\alpha$, $\chi_{D}$ is the key polynomial $\kappa_{\alpha}(x)$ \cite{MD}. For simplicity, we denote  the Newton polytopes of the Schubert polynomial $\S_{w}(x)$ and the
 key polynomial $\kappa_{\alpha}(x)$ as $\mathcal{S}_w$ and $\mathcal{S}_\alpha$, respectively.

\begin{cor}\label{cor: cor 1.2}
For any permutation $w\in S_n$, the following  conditions are equivalent:
\begin{enumerate}
\item[(1)] $\mathcal{S}_w$ is a lattice-free polytope;
\item[(2)] The movable intervals of columns in $D(w)$ are pairwise disjoint;
\item[(3)]  $w$ avoids the   permutation patterns 1423, 1432, 13254;
\item[(4)] The Ehrhart polynomial $i(\mathcal{S}_{w},  t)$  factors as the product of the Ehrhart polynomials $i(P(SM_{d_j}(D_{j})),t)$ of the Schubert matroid polytopes for each column $D_{j}$ of $D(w)$. In particular, $\mathcal{S}_w$ is Ehrhart positive.
\end{enumerate}
\end{cor}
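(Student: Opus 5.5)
Corollary \ref{cor: cor 1.2} follows from Theorem \ref{thm: the main thm} once we specialize to $D=D(w)$, using that $\mathcal{S}_w=\mathcal{S}_{D(w)}$ by Fink--M\'esz\'aros--St.\,Dizier. Applying the theorem to $D(w)$ gives (1)$\Leftrightarrow$(4) at once. Ehrhart positivity in (4) then comes from the Ferroni--Morales--Panova result: Schubert matroid polytopes are lattice path matroid polytopes, and a product of polynomials with positive coefficients again has positive coefficients. Two things remain: show that for Rothe diagrams the condition $|M(D_i)\cap M(D_j)|\le 1$ is the same as pairwise disjointness, and show that disjointness is equivalent to avoiding $1423$, $1432$, $13254$.

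For the first point I would use the structure of $D(w)$. Column $j$ is $D_j=\{i: i<w^{-1}(j),\ w(i)>j\}$, so row $w^{-1}(j)$ is never in $D_j$ and $d_j<w^{-1}(j)$. The movable interval $M(D_j)=[a_j,d_j]$ is nonempty exactly when some row $i<d_j$ is missing from $D_j$, i.e.\ has $w(i)<j$. I claim that $M(D_i)\cap M(D_j)=\{r\}$ for $i<j$ cannot happen for Rothe diagrams. Suppose it did, so one interval ends at $r$ and the other starts at $r$. Rothe diagrams satisfy the northwest property: if $(r,c),(r',c')\in D(w)$ with $r<r'$ and $c>c'$, then $(r,c')\in D(w)$. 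I would combine this with the explicit description of $a_j$ (the first row $i$ with $w(i)<j$ or $w(i)=j$) and the fact that $d_j$ lies in $D_j$. The aim is to show that a shared endpoint $r$ forces a second shared row, $r-1$ or $r+1$, which contradicts $|M(D_i)\cap M(D_j)|= 1$. I expect a short case analysis depending on which column's interval ends at $r$.

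For the pattern step I would prove both directions directly.

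\emph{If $w$ contains a pattern, the intervals overlap.} Given an occurrence of $1423$, $1432$ or $13254$, I would take a minimal occurrence and check in $D(w)$ that two columns have movable intervals sharing a row. In $1423$, the columns of the values "2" and "3" both have a box in the row of "4" and a gap in the row of "1". For $13254$, the columns of "2" and "4" overlap in a similar way. The minimality argument is needed so that the gap and box rows of the two intervals really interleave.

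\emph{If two intervals overlap, $w$ contains a pattern.} Suppose $M(D_i)\cap M(D_j)\ne\emptyset$ with $i<j$. Each interval comes with a witness row $p$ with $w(p)<i$ (respectively $<j$) above a box row $q$ with $w(q)>i$, lying above $w^{-1}(i)$. Choosing these witnesses carefully and comparing their relative positions yields one of the three patterns.

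The main obstacle is this converse direction. The witnesses for the two columns can be arranged in several orders, and I expect the case split to be exactly where $13254$ appears: it covers the case where the two columns' gap rows are distinct and nested. I would organize the cases by whether $w^{-1}(i)<w^{-1}(j)$ or not, and by which witness row lies higher.
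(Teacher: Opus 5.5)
Your architecture matches the paper's. You specialize Theorem~\ref{thm: the main thm} to $D(w)$ via $\mathcal{S}_w=\mathcal{S}_{D(w)}$ to get (1)$\Leftrightarrow$(4) and Ehrhart positivity, and then prove a combinatorial statement about Rothe diagrams (the paper's Proposition~\ref{Schubert}). You are right that one must upgrade ``$|M(D_i)\cap M(D_j)|\le 1$'' to disjointness for Rothe diagrams. The direction ``pattern $\Rightarrow$ overlap'' is fine and needs no minimality. For an occurrence at positions $p_1<p_2<\cdots$, row $p_1$ is a gap in both relevant columns, and row $p_2$ (resp.\ rows $p_2$ and $p_4$ for $13254$) has boxes in them, so both movable intervals contain $[p_1,p_2]$.

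However, the two steps that carry the real content are only announced, not proved. One is the ``short case analysis'' ruling out a one-point overlap. The other is the converse, where ``choosing these witnesses carefully \dots yields one of the three patterns'' is not yet an argument. Your guess for the converse also looks at the wrong configuration. In $D(13254)$ the two relevant columns (values $2$ and $4$) have the \emph{same} topmost gap, row $1$. What separates $13254$ from $1423/1432$ is where the box rows sit, not the gap rows.

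The missing observation, which the paper uses via its hook condition (3) in Proposition~\ref{Schubert}, is that you never need separate gap witnesses. Let $i<j$ and suppose $M(D_i)=[a,d_i]$ meets $M(D_j)$.
\begin{itemize}
\item Since $d_i\in D_i$, we have $w^{-1}(i)>d_i>a$. So the gap $(a,i)$ is cut by a horizontal ray, i.e.\ $w(a)<i<j$.
\item Hence $(a,j)$ is also a gap. Since $d_j=\max M(D_j)\ge a$ (the intervals meet inside $[a,d_i]$) and $(a,j)\notin D(w)$, we get $d_j>a$.
\item So the hook at $(a,w(a))$ has boxes from two columns strictly to its lower right, and both intervals contain $\{a,a+1\}$.
\end{itemize}
This settles your first point at once: any overlap has size $\ge 2$, with no northwest-property analysis needed.

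It also reduces the converse to a single picture. If some row below $a$ has boxes in both columns $i$ and $j$, you get $1423$ or $1432$, according to the order of $w^{-1}(i)$ and $w^{-1}(j)$. Otherwise, choose boxes $(q,i)$ and $(q',j)$ with $q,q'>a$. Then:
\begin{itemize}
\item $q<q'$, since otherwise $(q',i)\in D(w)$;
\item $w(q)<j$, since otherwise $(q,j)\in D(w)$;
\item $(q',i)\notin D(w)$ forces $q<w^{-1}(i)<q'$.
\end{itemize}
So the positions $a<q<w^{-1}(i)<q'<w^{-1}(j)$ realize $13254$. With this supplied, your outline becomes a complete proof.
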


For a permutation $w \in S_n$, the Grothendieck polynomial $\mathfrak{G}_w(x)$ is the $K$-theoretic analog of the Schubert polynomial $\S_w(x)$. It is  well known  that  $\mathfrak{G}_w(x)$ is not homogeneous, and $\S_w(x)$ is the lowest-degree homogeneous component of $\mathfrak{G}_w(x)$. 

\begin{cor}\label{corforGrothendicek}
For any permutation $w\in S_n$, the following  conditions are equivalent:
\begin{enumerate}
\item[(1)] $\Newton(\G_{w})$ is a lattice-free polytope;
\item[(2)] The movable intervals of columns in $D(w)$ are pairwise disjoint;
\item[(3)]  $w$ avoids the   permutation patterns 1423, 1432, 13254;
\item[(4)] The Ehrhart polynomial $i(\Newton(\G_{w}),  t)$  factors as the product of the Ehrhart polynomials $i(P_{\mathrm{sp}}(SM_{d_j}(D_{j})),t)$ of the spanning set polytopes for each column $D_{j}$ of $D(w)$. In particular, $\Newton(\mathfrak{G}_w)$ is Ehrhart positive.
\end{enumerate}
\end{cor}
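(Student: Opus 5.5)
The plan is to deduce the statement from Theorem~\ref{thm: the main thm} and Corollary~\ref{cor: cor 1.2}, together with the known description of $\Newton(\G_w)$. I will take as input the result of Pechenik, Speyer and Weigandt (or, as the authors may do, prove it directly) that $\Newton(\G_w)$ is the Minkowski sum, over the columns of $D(w)$, of the spanning set polytopes $P_{\mathrm{sp}}(SM_{d_j}(D_j))$. Equivalently, its support consists of the vectors $\mathbf{1}$-dominated by sums of column spanning sets. The Schubitope $\mathcal{S}_w$ is the analogous Minkowski sum of the Schubert matroid polytopes $P(SM_{d_j}(D_j))$. It is also the lowest-degree face of $\Newton(\G_w)$, cut out by minimizing $x_1+\cdots+x_n$.

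\textbf{Step 1: (3)$\Leftrightarrow$(2).} This is already part of Corollary~\ref{cor: cor 1.2}.

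\textbf{Step 2: (1)$\Rightarrow$(2).} Since $\mathcal{S}_w$ is a face of $\Newton(\G_w)$, any non-vertex lattice point of $\mathcal{S}_w$ is a non-vertex lattice point of $\Newton(\G_w)$. So if $\Newton(\G_w)$ is lattice-free, then $\mathcal{S}_w$ is lattice-free, and (2) follows from Corollary~\ref{cor: cor 1.2}.

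\textbf{Step 3: (2)$\Rightarrow$(4) and (1).} Assume the movable intervals $M(D_j)$ are pairwise disjoint. Each spanning set polytope of a Schubert matroid $SM_{d_j}(D_j)$ is a $0/1$ polytope. Its coordinates outside $M(D_j)$ are determined: they equal $1$ on $[d_j]\setminus M(D_j)$ (the initial full segment) and $0$ beyond $d_j$. Hence $P_{\mathrm{sp}}(SM_{d_j}(D_j))$ is a translate of a polytope living in the coordinate subspace $\mathbb{R}^{M(D_j)}$.

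With disjoint supports, the Minkowski sum is, up to lattice translation, the Cartesian product of these pieces. I must check that the fixed coordinates from other columns merely translate and do not interact. This works because each column contributes a constant vector on coordinates outside its movable interval. Then:
\begin{itemize}
\item the Ehrhart polynomial of the product is the product of the Ehrhart polynomials, which gives (4);
\item a product of $0/1$ polytopes is a $0/1$ polytope in the relevant coordinates, hence lattice-free, which gives (1).
\end{itemize}
Ehrhart positivity follows because spanning set polytopes of Schubert matroids are again lattice path matroid–type polytopes. Failing that, it follows from an explicit count, e.g.\ via Ferroni–Morales–Panova-type results, or by identifying $P_{\mathrm{sp}}$ of a Schubert matroid with a lattice path polymatroid.

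\textbf{Step 4: (4)$\Rightarrow$(2).} This is the main obstacle. I would compare leading or top coefficients. If two movable intervals share at least two elements, the Minkowski sum has dimension strictly smaller than the sum of the dimensions of the factors, so the degree of $i(\Newton(\G_w),t)$ is less than the degree of the product, a contradiction.

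If they share exactly one element, which is impossible for Rothe diagrams under (2), I would instead exhibit an extra lattice point. Alternatively, I would restrict to the lowest-degree face: evaluating the lowest-degree part recovers $\mathcal{S}_w$ and the Schubert matroid polytopes. I would then use Corollary~\ref{cor: cor 1.2}, noting that for Rothe diagrams, condition (2) of Theorem~\ref{thm: the main thm} (intersections of size at most one) is equivalent to pairwise disjointness.

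Making the degree or face argument rigorous is the hard part. In particular, I need to show that the dimension of $\Newton(\G_w)$ drops, or that the number of lattice points is strictly smaller than the product, whenever two intervals overlap. I would first try a direct injection argument: the map from tuples of spanning sets to their sum is non-injective when two intervals overlap in two positions.
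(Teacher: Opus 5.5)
\textbf{The gap is the input you assume in Step 3.} It is not true in general that $\Newton(\mathfrak{G}_w)=\sum_j P_{\mathrm{sp}}(SM_{d_j}(D_j))$, so it cannot be a theorem you cite. Pechenik--Speyer--Weigandt compute $\deg\mathfrak{G}_w$, not this polytope. In general only the inclusion $\subseteq$ is available, from \cite[Theorem 3.6]{MSSD}.

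Take $w=1423$. Here $D_2=D_3=\{2\}$, so the Minkowski sum contains $(2,2)$. But
$\mathfrak{G}_{1423}=x_1^2+x_1x_2+x_2^2-x_1^2x_2-x_1x_2^2$
has degree $3$, so $x_1^2x_2^2$ is not in its support.

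Equality holds only under hypothesis (2), and proving it there is the real content of (2)$\Rightarrow$(4). The paper does this with the Fan--Guo set-filling model for $1432$-avoiding permutations:
\begin{itemize}
\item disjointness of the movable intervals lets one drop the row condition, column by column;
\item the fillings of column $j$ produce exactly the spanning sets of $SM_{d_j}(D_j)$;
\item all fillings yielding a given spanning set $A$ carry the same sign $(-1)^{|A|-|D_j|}$, so nothing cancels.
\end{itemize}
Without an argument of this kind, your Step 3 proves (2)$\Rightarrow$(1) but not (2)$\Rightarrow$(4). The inclusion does suffice for (1): under (2) the sum of spanning set polytopes lies in a translated unit cube, and so does $\Newton(\mathfrak{G}_w)$.

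\textbf{Ehrhart positivity is also not established.} $P_{\mathrm{sp}}$ is not a matroid base polytope, since it does not lie in a hyperplane, so ``lattice path matroid--type'' is not an argument. The paper proceeds as follows:
\begin{itemize}
\item write $P_{\mathrm{sp}}(M)=\mathbf{1}-P_{\mathrm{ind}}(M^*)$ and note that $M^*$ is again Schubert after reversing the ground set;
\item slice $P_{\mathrm{ind}}$ by degree, with each slice count equal to $i(P(N_k),t)-i(P(F_k),t)$ for nested Schubert (lattice path) matroids;
\item apply the Ferroni--Morales--Panova coefficientwise monotonicity.
\end{itemize}

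\textbf{Step 4 is not the obstacle you think.} Your dimension idea works with only the known inclusion:
\begin{itemize}
\item Each $P_{\mathrm{sp}}(SM_{d_j}(D_j))$ has dimension $|M(D_j)|$. It contains $\e_{[d_j]}$ and $\e_{[d_j]}-\e_i$ for every $i\in M(D_j)$, because no element of $M(D_j)$ is a coloop.
\item $\Newton(\mathfrak{G}_w)\subseteq\sum_j P_{\mathrm{sp}}(SM_{d_j}(D_j))$ lies in a translate of $\mathbb{R}^{\bigcup_j M(D_j)}$.
\item Equal Ehrhart polynomials have equal degrees, which forces $\sum_j |M(D_j)|\le \bigl|\bigcup_j M(D_j)\bigr|$, hence pairwise disjointness.
\end{itemize}
This also covers one-element overlaps, so no separate case is needed. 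Your alternative of ``restricting to the lowest-degree face'' does not follow from an identity of Ehrhart polynomials and should be dropped. The dimension count is a cleaner route to (4)$\Rightarrow$(2) than the paper's. The paper instead counts lattice points of $tQ$, using the midpoint with two decompositions together with the integer decomposition property for sums of spanning set polytopes.
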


\begin{cor}\label{cor: cor 1.3}
    For a composition $\alpha$, the following  conditions are equivalent:
\begin{enumerate}
\item[(1)] $\mathcal{S}_{\alpha}$ is a lattice-free polytope;
\item[(2)] $\alpha$ avoids the composition pattern $(0, 2)$, i.e., there do not exist $i<j$ such that $\alpha_{j}-\alpha_{i}\ge 2.$
\item[(3)] The Ehrhart polynomial $i(\mathcal{S}_{\alpha}, t)$  equals the product of the Ehrhart polynomials $i(P(SM_{d_j}(D_{j})),t)$ of the matroid polytopes for each column $D_{j}$ of $D(\alpha)$. In particular, $\mathcal{S}_{\alpha}$ is Ehrhart positive.
\end{enumerate}
\end{cor}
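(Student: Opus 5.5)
The plan is to deduce the corollary from Theorem~\ref{thm: the main thm} applied to the skyline diagram $D=D(\alpha)$. Since $\chi_{D(\alpha)}=\kappa_\alpha(x)$, we have $\mathcal{S}_\alpha=\mathcal{S}_{D(\alpha)}$. The theorem then gives (1)$\Leftrightarrow$(3) at once, and it leaves us to show that condition (2) of Theorem~\ref{thm: the main thm} for $D(\alpha)$ is equivalent to $\alpha$ avoiding the pattern $(0,2)$. Ehrhart positivity follows, as in the remark after the theorem, from the result of Ferroni, Morales and Panova that Schubert matroid polytopes are Ehrhart positive.

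First I describe the movable intervals. In $D(\alpha)$ row $i$ consists of the boxes in columns $1,\dots,\alpha_i$, so the $c$-th column is $D_c=\{i:\alpha_i\ge c\}$. Hence $M(D_c)=[r_c,s_c]$, where
\begin{itemize}
\item $r_c$ is the first row with $\alpha_{r_c}<c$;
\item $s_c$ is the last row with $\alpha_{s_c}\ge c$.
\end{itemize}
This interval is nonempty exactly when $r_c<s_c$.

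\emph{Pattern $\Rightarrow$ large intersection.} Suppose $i<j$ and $\alpha_j-\alpha_i\ge 2$. Put $c=\alpha_i+1$; then $c$ and $c+1$ are both at most $\alpha_j$. For $c'\in\{c,c+1\}$, row $i$ has $\alpha_i<c'$, so $r_{c'}\le i$. Row $j$ has $\alpha_j\ge c'$, so $s_{c'}\ge j$. Thus both $M(D_c)$ and $M(D_{c+1})$ contain $\{i,j\}$, their intersection has at least two elements, and condition (2) of the theorem fails.

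\emph{Avoidance $\Rightarrow$ small intersections.} Assume $\alpha$ avoids $(0,2)$, and take columns $c<c'$ whose movable intervals are both nonempty. Every row before $r_{c'}$ has $\alpha\ge c'>c$, so $r_c\ge r_{c'}$. Every row after $s_c$ has $\alpha<c<c'$, so $s_{c'}\le s_c$. The intersection is therefore $[r_c,\min(s_c,s_{c'})]=[r_c,s_{c'}]$. If this intersection had at least two elements, then $r_c<s_{c'}$. But $\alpha_{r_c}\le c-1$ and $\alpha_{s_{c'}}\ge c'\ge c+1$, which would be an occurrence of $(0,2)$, a contradiction. So all pairwise intersections have size at most one.

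The only real care needed is in this last step: the interval bookkeeping (the two inequalities $r_c\ge r_{c'}$ and $s_{c'}\le s_c$) is where the argument could slip. Everything else is a direct application of Theorem~\ref{thm: the main thm}.
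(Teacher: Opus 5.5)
Your proof is correct, but for (2)$\Rightarrow$(1) it takes a different route from the paper.

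The paper proves (1)$\Leftrightarrow$(2) through a separate proposition. For the pattern-containing direction it reduces to Theorem~\ref{thm: the main thm}, as you do. However, it only asserts that $D(\alpha)$ then has two columns whose movable intervals share two elements. You exhibit them: columns $\alpha_i+1$ and $\alpha_i+2$, whose intervals both contain $[i,j]$.

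For avoidance $\Rightarrow$ lattice-free, the paper does not use movable intervals at all. It invokes Proposition~\ref{lattice point set}, which describes the lattice points of $\mathcal{S}_\alpha$ as $\{\beta:\beta\le_k\alpha\}$ and its vertices as $\{\beta:\beta\le\alpha\}$. It then observes that for a $(0,2)$-avoiding $\alpha$ no move $m_{i,j}$ is available, so every lattice point is a vertex. Condition (3) is then read off from Theorem~\ref{thm: the main thm}.

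Your nesting argument instead verifies condition (2) of Theorem~\ref{thm: the main thm} directly: from $r_{c'}\le r_c$ and $s_{c'}\le s_c$ the intersection is $[r_c,s_{c'}]$. This makes the corollary a purely combinatorial consequence of the main theorem, with no input about key polynomials. It also avoids a step the paper leaves tacit: that $(0,2)$-avoidance is preserved under the swaps $t_{i,j}$. That is true, since under avoidance such a swap exchanges values $a$ and $a+1$. It is what guarantees that no $m_{i,j}$ becomes applicable partway through a sequence of moves.

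In exchange, the paper's route gives a check of lattice-freeness that does not depend on Theorem~\ref{thm: the main thm}.
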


\subsection*{Acknowledgments}
This work is supported by the National Key Research and Development Program of China (No. 2025YFA1017702) and the National Natural Science Foundation of China (No. 12471314). 

\section{Preliminaries}

In this section, we recall the definitions of Schubert polynomials, Grothendieck polynomials, key polynomials, matroid polytopes and Schubitopes.

\subsection{Schubert polynomials and key polynomials}

Schubert polynomials were introduced by Lascoux and Sch\"{u}tzenberger\cite{Lascou} as  polynomial representatives for the cohomology classes of  Schubert varieties in the flag variety.  
Key polynomials, also known as Demazure characters, are polynomials associated to a composition $\alpha$, which  were introduced by Demazure\cite{MD}, and further studied in  \cite{Lascoux2, Lascoux3,Reiner}.

Let $f$ be a polynomial in $x_{1}, \ldots, x_{n}$. The divided difference operator $\partial_{i}$ acting on $f$ is defined as follows:
    $$\partial_{i}f=\frac{f-s_{i}f}{x_{i}-x_{i+1}}, $$
    where $s_{i}f$ denotes the operator that exchanges $x_{i}$ and $x_{i+1}$ in $f$.
 Schubert polynomials $\S_{w}(x)$ and Grothendieck polynomials $\G_{w}(x)$ are defined  recursively as follows. For the longest element $w_{0}=n(n-1)\cdots21 \in S_{n}$, set $\S_{w_{0}}(x)=\G_{w_0}(x)=x_1^{n-1}x_2^{n-2}\cdots x_{n-1}.$
If $w\neq w_0$, then there exists $1 \le i \le n-1$ such that $w(i)<w(i+1)$. Let 
$$\S_{w}(x)=\partial_{i}\S_{ws_{i}}(x), \quad \G_{w}(x)=\partial_{i}(1-x_{i+1})\G_{ws_{i}}(x),$$
where $ws_{i}$ is the permutation obtained by swapping $w(i)$ and $w(i+1)$ in $w$. 

For a composition $\alpha=(\alpha_{1},\alpha_{2},\ldots, \alpha_{n})$,  the  key polynomial $\kappa_{\alpha}(x)$ is defined as below. If $\alpha$ is weakly  decreasing, then set $\kappa_{\alpha}(x)=x_{1}^{\alpha_{1}}x_{2}^{\alpha_{2}}\cdots x_{n}^{\alpha_{n}}$. Otherwise, let $\kappa_{\alpha}(x)=\partial_{i}(x_{i}\kappa_{\alpha'})$, where $\alpha'=(\alpha_{1},\ldots,\alpha_{i+1},\alpha_{i},\ldots, \alpha_{n})$ and $\alpha_{i}<\alpha_{i+1}.$

The {\it Rothe diagram} of a permutation $w\in S_n$ is defined as
\[D(w)=\{(i, j): w(i)>j \ \text{and} \ w^{-1}(j)>i, \forall i, j \in [n]\}.\]
Equivalently, the boxes of 
 $D(w)$ are precisely those in the 
$n\times n$ grid that are not crossed by any rightward or downward ray emanating from the points $(i, w(i))$ for 
$1\leq i\leq n$. The polyline formed by the rightward and downward rays starting from a single point is  called a {\it hook}.
For example, let $w=365142$, then the  boxes of  $D(w)$ are the gray squares shown in Figure \ref{fig:two_plots} (A), and it can be recorded as
$D(365142)=(\{1,2,3\},\{1,2,3,5\},\emptyset,\{2,3\},\{2\},\emptyset).$

For a composition $\alpha=(\alpha_{1},\alpha_{2},\ldots,\alpha_n)$,
the {\it skyline diagram} $D(\alpha)$ of  $\alpha$ is   the subset of the
$n\times n$ grid consisting  of  $\alpha_{i}$  left-justified boxes in the $i$-th row for $i\in[n]$.
Similarly, $D(\alpha)$ can also be represented as $(D(\alpha)_{1},\ldots,D(\alpha)_{n})$, where $D(\alpha)_{j}=\{i\leq n:\alpha_{i}\ge j\}$ for $j\in[n]$.
For example, if $\alpha=(4,1,3,0,2)$, then $D(\alpha)$ is displayed in Figure \ref{fig:two_plots} (B), and $D(\alpha)=(\{1,2,3,5\},\{1,3,5\},\{1,3\},\{1 \},\emptyset).$

\begin{figure}[htbp]
    \centering
    % 第一个子图
    \begin{subfigure}[b]{0.45\textwidth}
       \centering
        \begin{tikzpicture}[scale=0.7]
            \draw [very thick](0mm, 0mm)--(48mm, 0mm)--(48mm, 48mm)--(0mm, 48mm)--(0mm, 0mm);

    \node at (20mm, 44mm) {$\bullet$};
    \node at (44mm, 36mm) {$\bullet$};
    \node at (36mm, 28mm) {$\bullet$};
    \node at (4mm, 20mm) {$\bullet$};
    \node at (36mm, 28mm) {$\bullet$};
    \node at (28mm, 12mm) {$\bullet$};
    \node at (12mm, 4mm) {$\bullet$};
    \draw [fill=gray] (0mm, 24mm)--(8mm,24mm)--(8mm, 32mm)--(0mm, 32mm);
     \draw [fill=gray] (0mm, 32mm)--(8mm,32mm)--(8mm, 40mm)--(0mm, 40mm);
     \draw [fill=gray] (0mm, 40mm)--(8mm,40mm)--(8mm, 48mm)--(0mm, 48mm);
     \draw [fill=gray] (8mm, 24mm)--(16mm,24mm)--(16mm, 32mm)--(8mm, 32mm);
     \draw [fill=gray] (8mm, 32mm)--(16mm,32mm)--(16mm, 40mm)--(8mm, 40mm);
     \draw [fill=gray] (8mm, 40mm)--(16mm,40mm)--(16mm, 48mm)--(8mm, 48mm);
    \draw [fill=gray] (24mm, 32mm)--(32mm,32mm)--(32mm, 40mm)--(24mm, 40mm);
     \draw [fill=gray] (24mm, 24mm)--(32mm,24mm)--(32mm, 32mm)--(24mm, 32mm);
     \draw [fill=gray] (32mm, 32mm)--(40mm,32mm)--(40mm, 40mm)--(32mm, 40mm);
     \draw [fill=gray] (8mm, 8mm)--(16mm,8mm)--(16mm, 16mm)--(8mm, 16mm);
    \draw [line width=1pt] (20mm, 0mm)--(20mm, 44mm)--(48mm, 44mm);
    \draw [line width=1pt] (4mm, 0mm)--(4mm, 20mm)--(48mm, 20mm);
    \draw [line width=1pt] (28mm, 0mm)--(28mm, 12mm)--(48mm, 12mm);
    \draw [line width=1pt] (36mm, 0mm)--(36mm, 28mm)--(48mm, 28mm);
    \draw [line width=1pt] (44mm, 0mm)--(44mm, 36mm)--(48mm, 36mm);
    \draw [line width=1pt] (12mm, 0mm)--(12mm, 4mm)--(48mm, 4mm);
    \draw [step=8mm] (0mm, 0mm) grid (48mm, 48mm);  
        \end{tikzpicture}
        \subcaption{$D(365142)$}
        \label{sub:D365142}
    \end{subfigure}
    \hfill  % 水平间距
    % 第二个子图
    \begin{subfigure}[b]{0.45\textwidth}
       \centering
        \begin{tikzpicture}[scale=0.7]
            \draw [very thick](0mm, 0mm)--(40mm, 0mm)--(40mm, 40mm)--(0mm, 40mm)--(0mm, 0mm);

            \draw[fill=gray] (0mm, 0mm)--(8mm,0mm)--(8mm, 8mm)--(0mm, 8mm);
            \draw[fill=gray] (8mm, 0mm)--(16mm,0mm)--(16mm, 8mm)--(8mm, 8mm);
            \draw [fill=gray] (0mm, 16mm)--(8mm, 16mm)--(8mm, 24mm)--(0mm, 24mm);
            \draw [fill=gray] (0mm, 24mm)--(8mm, 24mm)--(8mm, 32mm)--(0mm, 32mm);
            \draw [fill=gray] (24mm, 32mm)--(24mm, 40mm)--(32mm, 40mm)--(32mm, 32mm);
            \draw [fill=gray] (8mm, 16mm)--(16mm, 16mm)--(16mm, 24mm)--(8mm, 24mm);
            \draw [fill=gray] (16mm, 16mm)--(24mm, 16mm)--(24mm, 24mm)--(16mm, 24mm); 
            \draw[fill=gray] (0mm, 32mm)--(8mm, 32mm)--(8mm, 40mm)--(0mm, 40mm);
            \draw [fill=gray] (8mm, 32mm)--(16mm, 32mm)--(16mm, 40mm)--(8mm, 40mm);
            \draw [fill=gray] (16mm, 32mm)--(24mm, 32mm)--(24mm, 40mm)--(16mm, 40mm);
            \draw (0mm, 16mm)--(8mm, 8mm);
            \draw (8mm, 16mm)--(24mm, 0mm);
            \draw (16mm, 16mm)--(32mm, 0mm);
            \draw (8mm, 32mm)--(16mm, 24mm);
            \draw (24mm, 16mm)--(40mm, 0mm);
            \draw (16mm, 32mm)--(40mm, 8mm);
            \draw (24mm, 32mm)--(40mm, 16mm);
            \draw (32mm, 32mm)--(40mm, 24mm);
            \draw (32mm, 40mm)--(40mm, 32mm);
            \draw (8mm, 24mm)--(16mm, 32mm);
            \draw (0mm, 8mm)--(8mm, 16mm);
            \draw (16mm, 24mm)--(24mm, 32mm);
            \draw (8mm, 8mm)--(16mm, 16mm);
            \draw (24mm, 24mm)--(40mm, 40mm);
            \draw (16mm, 8mm)--(40mm, 32mm);
            \draw (16mm, 0mm)--(40mm, 24mm);
            \draw (24mm, 0mm)--(40mm, 16mm);
            \draw (32mm, 0mm)--(40mm, 8mm); 
            \draw [step=8mm] (0mm, 0mm) grid (40mm, 40mm);
        \end{tikzpicture}
        \subcaption{$D(4,1,3,0,2)$}
        \label{sub:D32101}
    \end{subfigure}
    \caption{A Rothe diagram and a skyline diagram}
    \label{fig:two_plots}
\end{figure}

\subsection{Schubitopes}

Let $D=(D_1,\ldots,D_n)\subseteq[n]^2$ be an arbitrary diagram. Let $I\subseteq[n]$ be a set of row indices and $j\in[n]$ be a column index. Construct the string $\mathrm{word}_{I}(D_j)$ in the following way. Read the $j$-th column of the $n\times n$ grid from top to bottom and record:
\begin{itemize}
\item[(1)] If $i\notin D_j$ and $i\in I$, record a left parenthesis $($;
\item[(2)]  If $i\in D_j$ and $i\notin I$, record a right  parenthesis $)$;
\item[(3)] If $i\in D_j$ and $i\in I$, record a star $\star$.
\end{itemize}
Let $\theta_{D_j}(I)$ denote the number of matched pairs of parentheses in $\mathrm{word}_{I}(D_j)$ plus the number of $\star$ in $\mathrm{word}_{I}(D_j)$, and  
$\theta_D(I)=\theta
_{D_1}(I)+\cdots+\theta
_{D_n}(I).$

\begin{defn}[\cite{CNY}]
The Schubitope associated to  the diagram $D$ is defined as:
$$\mathcal{S}_D=\left\{\t\in\mathbb{R}_{\geq0}^n\,\Big|\,\sum_{i=1}^nt_i=\#D\text{ and }\sum_{i\in I}t_i\leq\theta_D(I)\text{ for all }I\subseteq[n]\right\}.$$
\end{defn}

A {\it matroid} $M$ is an ordered pair $(E,\mathcal{I})$, where $E$ is a finite set and $\mathcal{I}$ is a family of subsets of $E$,  satisfying: 
(i) $\emptyset \in \mathcal{I}$; 
(ii) If $J\in \mathcal{I}$, and $I\subseteq J$, then $I\in \mathcal{I}$; 
(iii) If $I,J\in \mathcal{I}$, and $|I|<|J|$, then there exists $j\in J\backslash I$ such that $I\cup \{j\}\in \mathcal{I}$.  
 By (iii), all bases of a matroid have the same size. Therefore, we often denote a matroid as $M=(E,\mathcal{B})$, where $\mathcal{B}$ is the set of bases of $M$.

For a subset $S\subseteq[n]$, the {\it Schubert matroid} $SM_{n}(S)$ is the matroid with ground set $[n]$, and bases $\{T\subseteq[n]:T\preceq S\}$, where $T\preceq S$ means that $T$ is less than or equal to $S$ in the Gale order, i.e., 
(i) $|T|=|S|$; (ii) If we write $T=\{a_{1}<a_{2}<\cdots< a_{k}\}$ and $S=\{b_{1}<b_{2}<\cdots< b_{k}\}$, then  $a_{i}\leq b_{i}$ for $1\le i\le k$.

Given a matroid $M=([n],\mathcal{B})$, the associated {\it matroid base polytope} is constructed as follows. Let $\{ \e_{i}:1\leq i\leq n\} $ be the standard basis vectors of $\mathbb{R}^{n}$. For a subset $B=\{b_{1},\ldots,b_{k}\} \subseteq [n]$, denote $\e_{B}=\e_{b_{1}}+\cdots+\e_{b_{k}}$ by the indicator vector of the set $B$.
The matroid base polytope $P(M)$ of $M$ is defined as  the convex hull 
$$P(M)=\text{conv}\{\e_{B}:B\in \mathcal{B}\},$$
which can also  be defined by the rank function $r:2^{[n]}\rightarrow \mathbb{Z}_{\ge0}$ of $M$: 
$$P(M)=\left\{\t\in \mathbb{R}^n\,\Big|\, \sum_{i\in I}t_i\leq r(I)\text{ for all }I\subseteq [n],\text{ and }\sum_{i\in E}t_i=r([n])\right\}.$$

Let $\mathcal{S}$ denote the collection of spanning sets of $M$, that is, the subsets of $[n]$ containing a basis of $M$. The \emph{spanning set polytope} $P_{\mathrm{sp}}(M)$ is defined as the convex hull
\[
P_{\mathrm{sp}}(M) = \operatorname{conv}\{\e_S \mid S \in \mathcal{S}\}.
\]
The spanning set polytope can also be parameterized by
$$P_{\mathrm{sp}}(M)=\left\{\t\in \mathbb{R}^n\,\Big|\, r([n])-r([n]\backslash I)\leq \sum_{i\in I}t_i\leq |I|, \text{ for all  }I\subseteq [n]\right\}.$$
Let \(\mathcal{I}\) denote the
collection of independent sets of \(M\), that is, the subsets of \([n]\)
contained in a basis of \(M\). The \emph{independence polytope}
\(P_{\mathrm{ind}}(M)\) is defined as the convex hull
\[
P_{\mathrm{ind}}(M)
=
\operatorname{conv}
\left\{
\e_{I}\mid I\in\mathcal{I}
\right\}.
\]
Equivalently, in terms of the rank function of \(M\), the independence
polytope is given by
\[
P_{\mathrm{ind}}(M)
=
\left\{
\t\in\mathbb{R}_{\geq 0}^{n}
\;\Big|\;
\sum_{i\in I}t_i\leq r(I),
\text{ for all }I\subseteq [n]
\right\}.
\]
In particular, the inequalities corresponding to the singleton subsets
imply \(0\leq t_i\leq r(\{i\})\leq 1\) for every \(i\in E\). Moreover, the
matroid base polytope is the top face of the independence polytope.

A function $z:2^{[n]}\rightarrow \mathbb{R}$ is called \emph{submodular} if
$z(I)+z(J)\geq z(I\cap J)+z(I\cup J)$ for all $I,J\subseteq [n]$ and called \emph{supermodular} if $-z$ is submodular. 
A \emph{paramodular pair} is a pair $(y, z)$ of functions $2^{[n]} \to \mathbb{R}$ such that $y(\emptyset) = z(\emptyset) = 0$, $y$ is supermodular, $z$ is submodular, and $z(I) - y(J) \ge z(I \setminus J) - y(J \setminus I)$ for all $I, J \subseteq [n]$. A \emph{generalized polymatroid} is a polytope $Q \subset \mathbb{R}^n$ for which there exists a paramodular pair $(y, z)$ such that
\[
Q = \Big\{\t \in \mathbb{R}^n \;\Big|\; y(I) \le \sum_{i \in I} t_i \le z(I), \text{ for all } I \subseteq [n]\Big\}.
\]

Let $\rho : 2^{[n]} \to \mathbb{Z}_{\ge 0}$ be a monotone submodular function with $\rho(\emptyset) = 0$. An \emph{integral polymatroid}  is defined as
\[
    P(\rho) = \Big\{\t \in \mathbb{R}_{\ge 0}^n \;\Big|\; \sum_{i \in S} t_i \le \rho(S), \text{ for all } S \subseteq [n]\Big\}.
\]
This is a special case of a generalized polymatroid. Another important special case is the \emph{generalized permutohedron}, which is a polytope defined by
\[
P = \left\{\t \in \mathbb{R}^n \;\Big|\; \sum_{i \in I} t_i \le z(I), \text{ for  } I \subseteq [n] \text{ and } \sum_{i=1}^n t_i = z([n])\right\}
\]
for some submodular function $z : 2^{[n]} \to \mathbb{R}$ satisfying $z(\emptyset) = 0$. Specifically, the matroid base polytope $P(M)$ is a generalized permutohedron determined by the rank function $r(M)$ of $M$; see \cite{A.S}. Finally, for two polytopes $P$ and $Q$, their Minkowski sum is defined as $P + Q = \{u + v \mid u \in P, v \in Q\}$.

Denote $\chi_{D}$  by the {\it dual character} of the flag Weyl module  associated to any diagram $D$, see 
\cite{NG3, FMD} for precise definition. Recall that the {\it Newton polytope} $\Newton(f)$ of a polynomial $f=\sum_{\alpha}c_{\alpha}x^\alpha$ is the convex hull of exponent vectors of $f$, namely,
$\Newton(f)=\text{conv}\{\alpha: c_\alpha\neq 0\}$.
It turns out that both the Newton polytope of $\chi_{D}$ and the Schubitope of $D$ are  the Minkowski sum of  Schubert matroid polytopes corresponding to each column of $D$.
\begin{thm}[\cite{FMD}]\label{S_D and minkovski sum}
Let $ D = (D_1, \ldots, D_n)$ be a diagram.  Then
\begin{align*}
\mathcal{S}_D =\Newton(\chi_{D})= P(SM_{d_1}(D_1)) + \cdots + P(SM_{d_n}(D_n)).
\end{align*}
\end{thm}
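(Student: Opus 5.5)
We would prove the two equalities separately, with the Minkowski sum $P(SM_{d_1}(D_1))+\cdots+P(SM_{d_n}(D_n))$ as the common middle term.

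\emph{The Schubitope equals the Minkowski sum.} Each matroid base polytope $P(SM_{d_j}(D_j))$ is a generalized permutohedron. Its submodular function is the rank function $r_j$ of the Schubert matroid $SM_{d_j}(D_j)$, regarded on the ground set $[n]$ with the elements $>d_j$ as loops. For generalized permutohedra given by submodular functions $z_1,\dots,z_n$, the Minkowski sum is again a generalized permutohedron, with submodular function $z_1+\cdots+z_n$. This is the standard additivity of support functions: $\max_{\t\in P+Q}\langle \e_I,\t\rangle=\max_P+\max_Q$. Since $\theta_D=\sum_j\theta_{D_j}$ by definition, and $\sum_i t_i=\#D=\sum_j|D_j|=\sum_j r_j([n])$, it suffices to prove the identity
\[
r_j(I)=\theta_{D_j}(I)\qquad\text{for all } I\subseteq[n].
\]
For a single column we would argue greedily. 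Write $S=D_j$. A set $T\preceq S$ maximizing $|T\cap I|$ is built by scanning rows top to bottom. Each $i\in S\cap I$ (a $\star$) can be kept in $T$. Each $i\in S\setminus I$ (a ``$)$'') can be traded for an earlier, still unused row $i'\in I\setminus S$ (a ``$($''). The Gale condition $a_k\le b_k$ permits exactly such downward-to-upward replacements. The maximum number of successful trades is the number of matched parenthesis pairs, which is precisely $\theta_{D_j}(I)$. Both directions need checking: the upper bound comes from counting, for each prefix of rows, how many elements of $T$ can lie in it; the lower bound comes from an explicit $T$ built from the matching. We expect this to be the main combinatorial step, though it is routine.

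\emph{The Newton polytope equals the Minkowski sum.} We would use the realization of $\mathcal{M}_D$ as the span of the polynomials
\[
\prod_j \det\big(Y_{R_j,D_j}\big)
\]
in a generic matrix $Y$ of variables, obtained by applying upper triangular matrices to $\prod_j\det(Y_{D_j,D_j})$. Here $Y_{R,C}$ denotes the minor with row set $R$ and column set $C$. The weight of such a product is $\sum_j \e_{R_j}$. Since upper triangular row operations only move rows upward, the attainable row sets satisfy $R_j\preceq D_j$ column by column. This gives the inclusion of the support of $\chi_D$ into the sumset of the bases of the $SM_{d_j}(D_j)$.

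For the reverse inclusion, fix any tuple $(T_j)$ with $T_j\preceq D_j$. We would exhibit a nonzero weight vector of weight $\sum_j\e_{T_j}$. Choose a product of generic unipotent upper triangular elements whose action on $\prod_j\det(Y_{D_j,D_j})$ has a nonzero component $\prod_j\det(Y_{T_j,D_j})$. This component is nonzero because the polynomial ring is a domain and each factor is a nonzero minor. Nonvanishing of the whole weight component would then follow by a leading-term or specialization argument in the group parameters. Hence the support equals the sumset. The convex hull of a sumset is the Minkowski sum of the convex hulls, which gives $\Newton(\chi_D)=\sum_j P(SM_{d_j}(D_j))$.

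\emph{Main obstacle.} We expect the delicate point to be the reverse inclusion just described: a product of minors appearing in the expansion of a group translate might cancel against other terms of the same weight. To rule this out, we would first try a term-order argument that picks out a unique extremal monomial of $\prod_j\det(Y_{T_j,D_j})$. If that fails, we would fall back on the known description of $\chi_D$ via column-wise flagged tableaux.
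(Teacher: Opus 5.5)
The paper gives no proof of this statement; it quotes it from \cite{FMD}. Your first half is the standard argument there and is sound. Minkowski sums of generalized permutohedra add their submodular functions, and the column identity $r_j(I)=\theta_{D_j}(I)$ follows from the parenthesis matching. The upper bound is $|T\cap I|\le |I\cap[m]|+|D_j\cap[m+1,n]|$, with $m$ the row of the last unmatched ``$)$'' (or $m=0$); the lower bound comes from the explicit exchanges.

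The gap is in the second half, exactly where you flag it: you never establish that every weight $\sum_j\e_{T_j}$ with $T_j\preceq D_j$ actually occurs. In your model, $\mathcal{M}_D$ is the $B$-span of $\prod_j\det(Y_{D_j,D_j})$ for a generic $Y$. There the weight component of a translate is a linear combination of products of minors that satisfy Pl\"ucker-type relations, so nonvanishing is a real issue. The term-order argument and the tableau fallback are only announced, not carried out.

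Also, in this model the $B$-span is in general strictly smaller than the span of all $\prod_j\det(Y_{R_j,D_j})$ with $R_j\preceq D_j$, so the two descriptions in your first sentence must not be conflated. For example, $D(1432)$ has nonempty columns $\{2,3\}$ and $\{2\}$. Every translate puts equal coefficients on the linearly independent products $y_{12}\det(Y_{\{2,3\},\{2,3\}})$ and $y_{22}\det(Y_{\{1,3\},\{2,3\}})$. Hence $\e_1+\e_2+\e_3$ has multiplicity $1$ in the $B$-span, as in $\S_{1432}$, but multiplicity $2$ in the full span.

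The missing idea is to use the definition of \cite{FMD}, under which the obstacle never arises. There $Y$ is upper triangular and $\mathcal{M}_D=\mathrm{Span}_{\mathbb{C}}\{\prod_j\det(Y_{C_j,D_j}) : C_j\preceq D_j \text{ for all } j\}$. Each spanning product is a torus weight vector: it has weight $-\sum_j\e_{C_j}$ for the action $f(Y)\mapsto f(X^{-1}Y)$, i.e.\ it corresponds to the monomial $x^{\sum_j\e_{C_j}}$ of the dual character $\chi_D$. It is also nonzero. Indeed, if $c_k,d_k$ denote the $k$-th smallest elements of $C_j,D_j$, then $\det(Y_{C_j,D_j})$ contains $\prod_k y_{c_kd_k}$ with coefficient $1$ when $C_j\preceq D_j$, and $\mathbb{C}[Y]$ is a domain. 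These products span $\mathcal{M}_D$, and the coefficients of $\chi_D$ are dimensions of weight spaces, so nothing can cancel. Thus $\supp(\chi_D)$ is exactly the sumset of the bases of the $SM_{d_j}(D_j)$, and taking convex hulls gives $\Newton(\chi_D)=\sum_jP(SM_{d_j}(D_j))$, with no term-order or tableau argument needed.
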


\section{Lattice-free Schubitopes}

Recall that the Schubert matroid $SM_{d_j}(D_{j})$  for some $j\in[n]$ is the matroid with ground set $[n]$ and bases $\{T\subseteq[d_j]:T\preceq D_{j}\}$. For a subset $T$  smaller than $D_{j}$ in the Gale order, we can intuitively view it as the set of row indices of boxes in column $D_j$ after pushing the movable boxes upwards to available positions  (i.e., positions not occupied by other boxes).

In a given column $D_{j}$ of a diagram $D$, a box at position $(i,j)$ is called {\it movable} if there exists a position $(i', j)$ above it (with $i'<i$) that has no box; otherwise, it is called {\it immovable}. The set of row indices corresponding to   all positions that the movable boxes in this column can move to (including the row where the box itself is located) is called the {\it movable interval} of $D_{j}$. Clearly, this definition is consistent with the definition of movable interval given in Introduction, and $M(D_j)$ is an interval, i.e., a set of consecutive positive integers. Let $[a, b]=\{a, a+1, \ldots , b-1, b\}$ be the interval from $a$ to $b$. 

We use boldface letters to denote vectors. 

\begin{thm}\label{thm: main.lattice-free}
For any diagram $D=(D_1,\ldots,D_n)$, the Schubitope $\mathcal{S}_{D}$ is lattice-free  if and only if $|M(D_i)\cap M(D_j)|\le 1$ for any $1\le i<j\le n$.
\end{thm}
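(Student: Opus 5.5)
Throughout, write $P_j:=P(SM_{d_j}(D_j))$, so that $\mathcal{S}_D=P_1+\cdots+P_n$ by Theorem~\ref{S_D and minkovski sum}. I would first record the structure of a single $P_j$. Every basis $T\preceq D_j$ contains all immovable rows of $D_j$, is contained in $[d_j]$, and differs from $D_j$ only inside $M(D_j)$. Hence $P_j$ is a translate, by the fixed $0/1$ vector of immovable boxes, of a Schubert matroid polytope supported on the coordinates in $M(D_j)$. In particular, any lattice point of $P_j$ is some $\e_T$ with $T\preceq D_j$. Moreover, since each $P_j$ is an integral generalized permutohedron (a translated integral polymatroid base polytope), the integer decomposition property for Minkowski sums of integral polymatroids gives the following: every lattice point of $\mathcal{S}_D$ is of the form $\e_{B_1}+\cdots+\e_{B_n}$ with $B_j$ a basis of $SM_{d_j}(D_j)$. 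So lattice-freeness amounts to showing that every such sum is a vertex.

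\emph{Necessity.} Suppose $|M(D_i)\cap M(D_j)|\ge 2$ for some $i\ne j$, and pick two rows $a<b$ in the intersection. The aim is to find sets $B,B'$ with $B'=(B\setminus\{b\})\cup\{a\}$ that are bases of both Schubert matroids, up to the different immovable parts. Concretely, $B_i\cap M(D_i)$ and $B_j\cap M(D_j)$ should both contain $b$ but not $a$, and swapping $b\to a$ should keep both in the Gale-lower order ideal. Swapping a larger element for a smaller one preserves $\preceq D$, so it suffices to produce bases containing $b$ and not $a$ in each column. I would get these by pushing movable boxes of each column upward into the empty positions, arranging that row $a$ stays empty and row $b$ is filled. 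This is possible because $a,b$ both lie in the movable interval: $a\ge$ the top empty row and $b\le d_j$. Filling $b$ while emptying $a$ can be done by keeping the box at $d_j$ or sliding the lowest box to $b$. Then, with all other columns fixed at arbitrary vertices, the points
\[
u=\textstyle\sum_{k\ne i,j}\e_{B_k}+\e_{B_i}+\e_{B_j},\qquad v=\sum_{k\ne i,j}\e_{B_k}+\e_{B_i'}+\e_{B_j'}
\]
are distinct lattice points of $\mathcal{S}_D$. The lattice point $\sum_{k\ne i,j}\e_{B_k}+\e_{B_i}+\e_{B_j'}$ is their midpoint, so it is not a vertex, and $\mathcal{S}_D$ is not lattice-free.

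\emph{Sufficiency.} Movable intervals have length at least $2$, since each contains an empty row and a box below it. If they pairwise meet in at most one point, then no row lies in three of them, and overlapping intervals meet only at endpoints: the right endpoint $d_i$ of one is the left endpoint (the top empty row) of another. Given $x=\sum_j\e_{B_j}$, I would build a linear functional $c$ that is maximized on each $P_j$ uniquely at $\e_{B_j}$. Then $x$ is the unique maximizer of $c$ on $\mathcal{S}_D$, hence a vertex. On the interior of $M(D_j)$, I would take $c$ to be $1$ on $B_j$ and $0$ off it, plus a column-dependent constant, which does not affect which basis is optimal since $|B\cap M(D_j)|$ is constant.

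The main obstacle is the shared endpoint $p$. There the membership of $p$ in $B_i$ and in $B_j$ may disagree, so a single value $c_p$ must serve both columns. I would exploit the Gale-order structure. For the column in which $p=d_i$ is the bottom row, the normal cone of $\e_{B_i}$ should contain functionals where $c_p$ is arbitrarily negative whenever $p\notin B_i$. For the column in which $p$ is the top empty row, every basis may use $p$ freely, and $c_p$ arbitrarily large is allowed when $p\in B_j$. The cases to check are therefore which extreme values of $c_p$ remain compatible with each normal cone. If some combination fails, I would fall back to showing directly that any two decompositions of $x$ coincide, using that $x_p$ together with the fixed column sums over the disjoint interior parts forces each $B_j$.
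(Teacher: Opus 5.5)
Your necessity argument is correct and matches the paper's: you take bases $B_i,B_j$ containing $b$ but not $a$, perform the swaps $B_i',B_j'$, and exhibit the lattice point $\sum_{k\ne i,j}\e_{B_k}+\e_{B_i}+\e_{B_j'}$ as the midpoint of two distinct points of $\mathcal S_D$.

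The gap is in sufficiency, at exactly the step you call the main obstacle and then leave as ``cases to check.'' Write a block as $M(D_1)=[m_1,m_2],\dots,M(D_k)=[m_k,m_{k+1}]$, and let $p=\max M(D_i)=\min M(D_j)$ be a shared endpoint. In the mixed cases no extreme choice of $c_p$ works. Suppose $p\in B_i$ and $p\notin B_j$.
\begin{itemize}
\item $B_i-p+a$ is a basis for every $a\in[\min M(D_i),p)\setminus B_i$. This set is nonempty because row $\min M(D_i)$ is empty in $D_i$. So you need $c_p>c_a$.
\item $B_j-b+p$ is a basis for every $b\in B_j\cap(p,\max M(D_j)]$, which is also nonempty. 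So you need $c_p<c_b$.
\end{itemize}
The case $p\notin B_i$, $p\in B_j$ is symmetric. Hence $c_p$ must lie strictly between values of $c$ at other rows of both intervals. Those rows may themselves be shared endpoints, so what is missing is a proof that all these constraints can be satisfied simultaneously along the chain $m_1<\cdots<m_{k+1}$.

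Your column-dependent constant does not supply this. Added on all of $M(D_j)$, it also moves the shared endpoints. Added only on the interior, your justification fails, since $|B\cap\operatorname{int}M(D_j)|$ is not constant over bases. Your fallback is also insufficient as stated: uniqueness of the decomposition of $x$ into bases $B_j$ does not by itself make $x$ a vertex.

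Either of the following closes the gap.
\begin{enumerate}
\item[(a)] Set $c=\mathbf 1_{B_j}$ on the interior of each $M(D_j)$. Give the two unshared endpoints $m_1,m_{k+1}$ of a block their $0/1$ indicator values, and give each shared endpoint a value in $(0,1)$. The vertex $\e_{B_j}$ is the unique maximizer on $P_j$ if and only if $c_b>c_a$ for every exchange with $B_j-b+a$ a basis. Every such inequality then holds automatically, except those between the two endpoints of an interval whose endpoints are both shared. These constraints lie on the path $m_2,\dots,m_k$, and you can meet them by choosing $c_{m_2},c_{m_3},\dots$ in turn.
\item[(b)] Alternatively, show that every real decomposition $x=\sum_j y_j$ with $y_j\in P_j$ equals $\sum_j\e_{B_j}$, by reading coordinates down the chain. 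On rows of $M(D_j)$ lying in no other movable interval, $(y_j)_r$ is $x_r$ minus a constant. At the next endpoint, $(y_j)_r$ is forced by $\sum_{r\in M(D_j)}(y_j)_r=|D_j\cap M(D_j)|$. Then, if $x=\tfrac12(x'+x'')$ with $x'\ne x''$ in $\mathcal S_D$, averaging decompositions of $x'$ and $x''$ gives a decomposition of $x$. Its $j$-th summand $\tfrac12(y_j'+y_j'')$ must be the vertex $\e_{B_j}$, which forces $y_j'=y_j''$ and hence $x'=x''$.
\end{enumerate}

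Route (b) is essentially the paper's proof, which skips the IDP reduction entirely. It writes a lattice point as a positive convex combination of vertices and propagates equality of coordinates from row $m_1$ to $m_{k+1}$. It uses integrality on unshared rows and the fixed rank of each column at the shared endpoint. Route (a), once completed, would be a genuinely different argument via explicit normal-cone functionals.
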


\begin{proof}
We first prove the necessity. By contradiction, we  show that if there are two columns of $D$, say $D_1,D_2$,  such that $|M(D_1)\cap M(D_2)|\ge 2$, then $\mathcal{S}_{D}$ must contain  an non-vertex lattice point.  Assume that there are two integers  $p<q$ such that $\{p,q\}\subseteq M(D_1)\cap M(D_2)$. Then there exist $T_{1},T_{2}\preceq D_{1}$ and $S_{1},S_{2}\preceq D_{2}$, such that $\e_{T_{1}}=(a_{1},\ldots,1,\ldots,0,\ldots , a_{n})$ and $\e_{T_{2}}=(a_{1},\ldots,0,\ldots,1,\ldots ,a_{n})$ have the same values in all components except the $p$-th and $q$-th components; similarly, $\e_{S_{1}}=(b_{1},\ldots,1,\ldots,0,\ldots ,b_{n})$ and $\e_{S_{2}}=(b_{1},\ldots,0,\ldots,1,\ldots ,b_{n})$ have the same values in all components except the $p$-th and $q$-th components.

For $i\in [3, n]$, fix $L_{i}\preceq D_{i}$, and let $\e_{L}=\sum_{i=3}^{n}\e_{L_{i}}=(l_1, \ldots , l_n)$. Then the point $\v_{1}=\e_{T_{1}}+\e_{S_{1}}+\e_{L}=(a_{1}+b_{1}+l_1,\ldots,2+l_p,\ldots,l_q,\ldots, a_{n}+b_{n}+l_n)$ and the point $\v_{2}=\e_{T_{2}}+\e_{S_{2}}+\e_{L}=(a_{1}+b_{1}+l_1,\ldots,l_p,\ldots,2+l_q,\ldots, a_{n}+b_{n}+l_n)$ are two distinct points in $\mathcal{S}_{D}$, and the midpoint $\overline{\v}$ of $\v_{1}$ and $\v_{2}$ is 
\begin{align}\label{mdp}
\overline{\v}=(a_{1}+b_{1}+l_1,\ldots,1+l_p,\ldots,1+l_q,\ldots, a_{n}+b_{n}+l_n),
\end{align}
which is a non-vertex lattice point of $\mathcal{S}_{D}$. Moreover, $\overline{\v}$ can be obtained by at least two ways, i.e., $\overline{\v}=\e_{T_1}+\e_{S_2}+\e_L=\e_{T_2}+\e_{S_1}+\e_L$.

Now we prove the sufficiency. After deleting all the empty columns and up-justified columns in $D$. There are two cases. 

\noindent {\bf Case 1}. The movable intervals of boxes from different columns in $D$ are pairwise disjoint, i.e., for any distinct $i,j\in [n]$, $M(D_{i})\cap M(D_{j})=\emptyset$. Consequently, the nonconstant coordinates of $P(SM_{d_i}(D_{i}))$ and $P(SM_{d_j}(D_{j}))$ lie in $M(D_{i})$ and $M(D_{j})$ respectively, which are disjoint. Since $P(SM_{d_i}(D_{i}))$ and $P(SM_{d_j}(D_{j}))$ are 0/1 polytopes,  it is easy to see that their Minkowski sum is lattice-free.

\noindent {\bf Case 2}. There exist at least two columns in  $D$ whose movable intervals intersect in exactly one element, i.e., $\exists  i,j\in [n]$, such that $|M(D_{i})\cap M(D_{j})|= 1$. Partition the columns of $D$ into different blocks, such that the movable intervals of   columns from different blocks do not intersect. That is, let $D=\{D_{1}, D_{2},\ldots , D_{n} \}=\bigcup_{i=1}^{d} \mathcal{D}_{i}$, where for $ 1\leq i<j \leq d$,   columns in $\mathcal{D}_{i}$ and  columns in $\mathcal{D}_{j}$ have no intersecting movable intervals. Therefore,  it suffices to consider the Minkowski sum of Schubert matroid polytopes from a single block. Without loss of generality, let us consider the block $\mathcal{D}=\{D_{1}, D_{2}, \ldots ,D_{k}\}$.

For $1\le j\le k$, let $m_{j}=\min M(D_{j})$, and define $m_{k+1}=\max M(D_{k})$.
Since  each $M(D_{i})$ is a set of consecutive positive integers containing at least 2 elements, and the movable intervals in different columns of $\mathcal{D}$ intersect in at most one element, we find that  $m_{i}\neq m_{j}$ for $i\neq j$.  Without loss of generality, we can assume that the columns of $\mathcal{D}$ are arranged such that $m_1<m_2<\cdots<m_k$, as shown in Figure \ref{fig:n}.
\begin{figure}[hhhhh]
 \begin{center}
    \begin{tikzpicture}
[scale=0.8]
    \draw [very thick](0mm, 0mm)--(64mm, 0mm)--(64mm, 88mm)--(0mm, 88mm)--(0mm, 0mm);
    \draw [fill=gray] (40mm, 8mm)--(48mm,8mm)--(48mm, 16mm)--(40mm, 16mm)--(40mm, 8mm);
    \draw [fill=gray] (32mm, 32mm)--(48mm, 32mm)--(48mm, 88mm)--(16mm, 88mm)--(16mm, 64mm)--(24mm, 64mm)--(24mm, 48mm)--(32mm, 48mm);
    \draw  (40mm, 24mm)--(48mm, 24mm)--(48mm, 32mm)--(40mm, 32mm)--(40mm, 24mm)--(48mm, 32mm);
    \draw  (48mm, 24mm)--(40mm, 32mm);
    \node[above] at (44mm, 88mm){$D_{k}$};
    \node[above] at (36mm, 88mm){$\cdots$};
    \node[below] at (44mm, 26mm){$\vdots$};
     \node[left] at (40mm, 28mm){$\cdots$};
    \draw (24mm, 40mm)--(32mm, 40mm)--(32mm, 48mm)--(24mm, 48mm)--(24mm, 40mm)--(32mm, 48mm);
    \draw (32mm, 40mm)--(24mm, 48mm);
    \node[above] at (28mm, 88mm){$D_{3}$};
    \node[below] at (28mm, 44mm){$\vdots$};
     \draw [fill=gray] (16mm, 40mm)--(24mm, 40mm)--(24mm, 48mm)--(16mm, 48mm)--(16mm, 40mm);
    \draw (16mm, 56mm)--(24mm, 56mm)--(24mm, 64mm)--(16mm, 64mm)--(16mm, 56mm)--(24mm, 64mm);
     \draw (24mm, 56mm)--(16mm, 64mm);
    \node[above] at (20mm, 88mm){$D_{2}$};
      \node[below] at (20mm, 58mm){$\vdots$};
   \draw [fill=gray] (8mm, 56mm)--(16mm, 56mm)--(16mm, 64mm)--(8mm, 64mm)--(8mm, 56mm);
    \draw  (8mm, 48mm)--(16mm, 48mm)--(16mm, 56mm)--(8mm, 56mm)--(8mm, 48mm);
    \draw  (8mm, 48mm)--(16mm, 56mm);
   \draw  (8mm, 56mm)--(16mm, 48mm);
   \draw  (8mm, 40mm)--(16mm, 40mm)--(16mm, 48mm)--(8mm, 48mm)--(8mm, 40mm);
    \draw  (8mm, 40mm)--(16mm, 48mm);
   \draw  (8mm, 48mm)--(16mm, 40mm);
    \draw  (8mm, 32mm)--(16mm, 32mm)--(16mm, 40mm)--(8mm, 40mm)--(8mm, 32mm);
    \draw  (8mm, 32mm)--(16mm, 40mm);
   \draw  (8mm, 40mm)--(16mm, 32mm);
\node[below] at (12mm, 34mm){$\vdots$};
\node[below] at (12mm, 26mm){$\vdots$};
\node[below] at (12mm, 18mm){$\vdots$};
  \draw  (16mm, 32mm)--(24mm, 32mm)--(24mm, 40mm)--(16mm, 40mm)--(16mm, 32mm);
    \draw  (16mm, 32mm)--(24mm, 40mm);
   \draw  (16mm, 40mm)--(24mm, 32mm);
\node[below] at (20mm, 34mm){$\vdots$};
\node[below] at (20mm, 26mm){$\vdots$};
\node[below] at (20mm, 18mm){$\vdots$};
     \draw (8mm, 72mm)--(16mm, 72mm)--(16mm, 80mm)--(8mm, 80mm)--(8mm, 72mm)--(16mm, 80mm);
      \draw (16mm, 72mm)--(8mm, 80mm);
    \draw  (8mm, 0mm)--(16mm, 0mm)--(16mm, 8mm)--(8mm, 8mm)--(8mm, 0mm)--(16mm, 8mm);
    \draw  (16mm, 0mm)--(8mm, 8mm);
    \draw  (16mm, 0mm)--(24mm, 0mm)--(24mm, 8mm)--(16mm, 8mm)--(16mm, 0mm)--(24mm, 8mm);
    \draw  (24mm, 0mm)--(16mm, 8mm);
    \draw  (24mm, 0mm)--(32mm, 0mm)--(32mm, 8mm)--(24mm, 8mm)--(24mm, 0mm)--(32mm, 8mm);
    \draw  (32mm, 0mm)--(24mm, 8mm);
    \draw  (32mm, 0mm)--(40mm, 0mm)--(40mm, 8mm)--(32mm, 8mm)--(32mm, 0mm)--(40mm, 8mm);
    \draw  (40mm, 0mm)--(32mm, 8mm);
      \draw  (40mm, 0mm)--(48mm, 0mm)--(48mm, 8mm)--(40mm, 8mm)--(40mm, 0mm)--(48mm, 8mm);
    \draw  (48mm, 0mm)--(40mm, 8mm);
       \node[left] at (0mm, 76mm){$m_{1}$};
      \node[left] at (0mm, 60mm){$m_2$};
      \node[left] at (0mm, 44mm){$m_3$};
      \node[left] at (0mm, 28mm){$m_k$};
      \node[left] at (0mm, 12mm){$m_{k+1}$};
     \node[below] at (12mm, 74mm){$\vdots$};
      \draw [fill=gray] (8mm,80mm)--(16mm, 80mm)--(16mm, 88mm)--(8mm, 88mm)--(8mm, 80mm);
      \node[above] at (12mm, 88mm){$D_{1}$};
    \end{tikzpicture}
    \end{center}
\vspace{-4mm}
\caption{$\mathcal{D}=\{D_{1}, D_{2}, \ldots ,D_{k}\}$}
\label{fig:n}
\end{figure}

Suppose that $\v_{1},\v_{2}, \ldots, \v_{l}$ are any $l$ distinct vertices in $\sum _{j=1}^{k}P(SM_{d_j}(D_{j}))$. If the convex hull of $\v_{1}, \v_{2},\ldots,\v_{l}$ contains a lattice point $\v'$, and $\v' \neq \v_{1},\ldots,\v_{l}$, then  $\v'$ can be written as 
\[\v'=\lambda_{1}\v_{1}+\lambda_{2}\v_{2}+\cdots +\lambda_{l}\v_{l},\] 
where $0< \lambda_{i}< 1$ and $\lambda_{1}+\lambda_{2}+\cdots+\lambda_{l}=1$. 
We will show that $\v'=\v_1=\v_2=\cdots=\v_l$, leading to a contradiction.

By our assumption of the set $\mathcal{D}$, we have: (i) For any $j\in[k]$, the lowest movable box in $D_{j}$ is in row $m_{j+1}$, i.e., $M(D_{j})=[m_{j}, m_{j+1}]$. (ii) Except for $D_{1}$, the movable intervals  in other columns do not contain $[m_{1}, m_{2}-1]$, meaning the other columns have boxes in every row from $[m_{1}, m_{2}-1]$. Therefore, except for $D_{1}$, the basis of the Schubert matroid for every other column contains the interval $[m_{1}, m_{2}-1]$.
(iii) For $ i < m_1 $, the boxes in row $i$ of $D_{1}, \ldots, D_{k}$ are all immovable; for $ i> m_{k+1}$ , there are no boxes in row $i$ of $D_{1}, \ldots, D_{k}$. Hence, for any $i<m_1$ or $i> m_{k+1}$, the $i$-th components of $\v_{1},\v_{2}, \ldots, \v_{l}$ are all equal.

Now consider the $m_{1}$-th component of $\v'$. Notice that for $1\le j\le l$, 
\begin{align}\label{vj}
\v_j=\e_{B^{1}_{j}}+\cdots+\e_{B^{k}_{j}},
\end{align}
where $B^{j}_{r}$ is a basis of $SM_{d_r}(D_{r})$ for $r\in[k]$. Let $v_{j,i}$ denote the $i$-th component of $\v_{j}$. Then the $i$-th component of $\v'$ is
\begin{equation}\label{eq:the i-th component}
v'_i=\lambda_{1}v_{1,i}+\lambda_{2}v_{2,i}+\cdots +\lambda_{l}v_{l,i}.
\end{equation}
By (ii), for any $1\le j\le l$, $v_{j,m_{1}}=x_{j,m_{1}}+k-1$, where $x_{j,m_{1}}\in \{0,1\}$. By equation \eqref{eq:the i-th component}, we obtain
\begin{align*}       v'_{m_1}&=\lambda_{1}v_{1,m_{1}}+\lambda_{2}v_{2,m_{1}}+\cdots +\lambda_{l}v_{l,m_{1}}  \notag\\
&=\lambda_{1}x_{1,m_{1}}+\lambda_{2}x_{2,m_{1}}+\cdots +\lambda_{l}x_{l,m_{1}} +(\lambda_1+\cdots +\lambda_l) (k-1) \notag\\
&=\lambda_{1}x_{1,m_{1}}+\lambda_{2}x_{2,m_{1}}+\cdots +\lambda_{l}x_{l,m_{1}}+(k-1).
\end{align*}
Therefore, $\lambda_{1}x_{1,m_{1}}+\lambda_{2}x_{2,m_{1}}+\cdots +\lambda_{l}x_{l,m_{1}}=v'_{m_1}-(k-1)$ is an integer. Since $\lambda_{1}+\lambda_{2}+\cdots+\lambda_{l}=1$, and $0< \lambda_{i}< 1$, it follows that $x_{1,m_{1}}=x_{2,m_{1}}=\cdots=x_{l,m_{1}}=0\text{ or }1$, implying the $m_{1}$-th components of $\v_{1}, \ldots, \v_{l}$ are the same. Using the same argument for the $m_{1}+1,  \ldots, (m_{2}-1)$-th components, we can conclude that the $m_{1}+1,  \ldots, (m_{2}-1)$-th components of $\v_{1}, \ldots, \v_{l}$ are all equal.

Now consider the row $m_2$.
Since the bases of a matroid have the same size, and the $m_{1}, m_{1}+1,  \ldots, (m_{2}-1)$-th components of $\v_{1}, \ldots, \v_{l}$ are all the same, we find that the $m_2$-th component of $\e_{B_{j_1}}$ is the same  for each $\v_j$ in equation \eqref{vj}.
That is to say, the contribution of $D_1$ to each of $\v_1,\ldots,\v_l$ are the same.  Therefore, we can apply the same argument to the $m_{2}$-th component, and conclude that the $m_{2}$-th components of $\v_{1}, \ldots, \v_{l}$ are equal. Continuing this process, we can derive that for $ i\in [m_{1}, m_{k+1}]$, the $i$-th components of $\v_{1}, \ldots, \v_{l}$ are all equal. Hence $\v_{1}, \ldots, \v_{l}$ are the same point, a contradiction.
\end{proof}

\section{Ehrhart Polynomial of lattice-free Schubitopes} 
In this section, we factor the Ehrhart polynomial of lattice-free Schubitopes.

\begin{lem}[{\cite[Theorem 44.6, Corollary 46.2c]{A.S}}]
\label{minkowski_of_ gp}
Let $P_1,\ldots,P_k$ be integral polymatroids. Then the Minkowski sum $P_1 + \cdots + P_k$  is an integral polymatroid and has  the integer decomposition property (IDP). That is, 
\[
\left( \sum_{j=1}^k P_j \right) \cap \mathbb{Z}^n = \sum_{j=1}^k (P_j \cap \mathbb{Z}^n).
\]
\end{lem}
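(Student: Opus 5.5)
This lemma is quoted from Schrijver, so the plan is to assemble it from two standard polymatroid facts; no new argument specific to Schubitopes is needed.

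\textbf{Step 1: the sum is an integral polymatroid.} Write $P_j=P(\rho_j)$ with each $\rho_j$ monotone, submodular, integer-valued, and $\rho_j(\emptyset)=0$. Put $\rho=\rho_1+\cdots+\rho_k$; it inherits all four properties. The inclusion $\sum_j P_j\subseteq P(\rho)$ is immediate, since summing the inequalities $\sum_{i\in S}t^{(j)}_i\le\rho_j(S)$ over $j$ gives the inequality for $\rho$. For the reverse inclusion I would argue through vertices. By Edmonds' greedy algorithm, for a weight vector $c\ge 0$ with nonnegative entries ordered $c_{\sigma(1)}\ge\cdots\ge c_{\sigma(n)}$, the maximum of $c\cdot\t$ over $P(\rho)$ is attained at the greedy point $g_\rho$, whose $\sigma(m)$-th coordinate is $\rho(\{\sigma(1),\dots,\sigma(m)\})-\rho(\{\sigma(1),\dots,\sigma(m-1)\})$ (with coordinates set to $0$ where $c_i\le 0$). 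This formula is linear in $\rho$, so $g_\rho=g_{\rho_1}+\cdots+g_{\rho_k}$. Hence every vertex of $P(\rho)$ is a sum of vertices of the $P_j$. Alternatively, the support functions add, $h_{P(\rho)}=\sum_j h_{P_j}$, and two convex bodies with equal support functions coincide. Either way $\sum_j P_j=P(\rho)$, and $P(\rho)$ is integral because every greedy point is an integer vector.

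\textbf{Step 2: integer decomposition.} The inclusion $\supseteq$ is trivial. For $\subseteq$, I would use Edmonds' polymatroid intersection theorem, specifically that the intersection of two integral polymatroids is again an integral polyhedron. Induction reduces the claim to $k=2$. Take $\mathbf{x}\in(P(\rho_1)+P(\rho_2))\cap\mathbb{Z}^n$; I want $\mathbf{y}\in P(\rho_1)\cap\mathbb{Z}^n$ with $\mathbf{x}-\mathbf{y}\in P(\rho_2)$. Note that $\mathbf{x}-\mathbf{y}\in P(\rho_2)$ together with $\mathbf{y}\le \mathbf{x}$ is equivalent to $\mathbf{y}$ lying in the polymatroid $Q$ defined by the submodular function $\rho'(S)=\min_{T\subseteq S}\bigl(x(T)+\rho_2(S\setminus T)\bigr)$. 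This is essentially the contraction/translation of $P(\rho_2)$ truncated by $\mathbf{x}$.

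\textbf{Step 3: finishing, and the main obstacle.} The existence of some real decomposition of $\mathbf{x}$ shows that $\max\{y([n]) : \mathbf{y}\in P(\rho_1)\cap Q\}=x([n])$. Since $P(\rho_1)\cap Q$ is an integral polytope, this maximum is attained at an integer $\mathbf{y}$. Then $\mathbf{y}\le\mathbf{x}$, $y([n])=x([n])$ forces $\mathbf{x}-\mathbf{y}\ge 0$, and $\mathbf{x}-\mathbf{y}\in P(\rho_2)$, which gives the integral splitting. The hard point is exactly the integrality of the intersection of two polymatroids; in the paper I would simply cite \cite[Corollary 46.2c]{A.S} for it. The rest, including the induction over $k$ and the identification $\sum_j P_j=P(\sum_j\rho_j)$ from \cite[Theorem 44.6]{A.S}, is routine.
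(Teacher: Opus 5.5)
The paper gives no proof of this lemma beyond the citation to Schrijver. Your Step 1 is sound: greedy points add, so support functions add, and $\sum_j P_j=P(\sum_j\rho_j)$ is integral. Deducing integer decomposition from Edmonds' polymatroid intersection theorem is also the right idea.

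The reduction in Steps 2--3, however, is set up wrongly. The set of admissible $\mathbf{y}$ is $\{\mathbf{y}\ge\mathbf{0}:\ \mathbf{y}\le\mathbf{x},\ \mathbf{x}-\mathbf{y}\in P(\rho_2)\}$. This is not a polymatroid: it is closed \emph{upward} inside the box $[\mathbf{0},\mathbf{x}]$, not downward, because shrinking $\mathbf{y}$ enlarges $\mathbf{x}-\mathbf{y}$. Your $\rho'(S)=\min_{T\subseteq S}(x(T)+\rho_2(S\setminus T))$ is the rank function of $P(\rho_2)\cap\{\mathbf{z}\le\mathbf{x}\}$, so it constrains $\mathbf{z}=\mathbf{x}-\mathbf{y}$, not $\mathbf{y}$.

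The target value in Step 3 is also wrong. Any split $\mathbf{x}=\mathbf{y}+\mathbf{z}$ has $y([n])=x([n])-z([n])$, and $\mathbf{y}\le\mathbf{x}$ together with $y([n])=x([n])$ forces $\mathbf{y}=\mathbf{x}$ and $\mathbf{z}=\mathbf{0}$.

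A concrete failure: let $n=2$, let $\rho_1$ be the rank function of $U_{1,2}$, let $\rho_2$ be that of the matroid in which $1$ is a loop and $2$ a coloop, and let $\mathbf{x}=(1,1)=(1,0)+(0,1)$. The only valid $\mathbf{y}$ is $(1,0)$. Yet $Q=P(\rho')=\{0\}\times[0,1]$, so $\max\{y([n]):\mathbf{y}\in P(\rho_1)\cap Q\}=1\ne 2$. This maximum is attained only at $\mathbf{y}=(0,1)$, and there $\mathbf{x}-\mathbf{y}=(1,0)\notin P(\rho_2)$.

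The missing step is to pass to the complementary polymatroid.
\begin{itemize}
\item Set $g(S)=x(S)-\rho'([n])+\rho'([n]\setminus S)$. Since $\rho'(A\cup\{e\})-\rho'(A)\le x_e$, $g$ is integral, monotone and submodular with $g(\emptyset)=0$. The bases of $P(g)$ are exactly the vectors $\mathbf{x}-\mathbf{b}$ with $\mathbf{b}$ a base of $P(\rho')$.
\item Let $\mathbf{x}=\mathbf{y}_0+\mathbf{z}_0$ be a real split. Then $\mathbf{z}_0\in P(\rho')$, so $\mathbf{z}_0$ lies below some base $\mathbf{b}$ of $P(\rho')$.
\item The vector $\mathbf{x}-\mathbf{b}$ satisfies $\mathbf{0}\le\mathbf{x}-\mathbf{b}\le\mathbf{y}_0$. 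Hence it lies in $P(\rho_1)$ and is a base of $P(g)$, which gives $\max\{y([n]):\mathbf{y}\in P(\rho_1)\cap P(g)\}=g([n])$.
\item By integrality of polymatroid intersection there is an integral maximizer $\mathbf{y}$. It is a base of $P(g)$, so $\mathbf{x}-\mathbf{y}$ is an integral base of $P(\rho')\subseteq P(\rho_2)$.
\end{itemize}
Equivalently, the admissible set is the intersection of two integral generalized polymatroids, and Frank's intersection theorem yields an integer point directly from nonemptiness. With this correction and your induction on $k$, the argument goes through.
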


\begin{remark}\label{rmk:IDP}
By \cite[Corollary~42.1e]{A.S}, the base polytope
$P(M)$ of every matroid $M$ has the integer decomposition
property. Note that the base polytope is the maximal face of the independence polytope (which is a polymatroid) for any matroid. 
For matroids $M_1,\ldots,M_k$, any integer point $x \in \sum P(M_j)$ also belongs to the sum of the corresponding polymatroids $\sum P_{\mathrm{ind}}(M_j)$. Lemma \ref{minkowski_of_ gp}, $x$ can be decomposed as $x = \sum x_j$ with $x_j \in P_{\mathrm{ind}}(M_j) \cap \mathbb{Z}^n$. Since $x\in \sum P(M_j)$, the sum of the coordinates of $x$ strictly equals $\sum r(M_j)$, and each $x_j$ satisfies the polymatroid bound $\sum (x_j)_i \le r(M_j)$, it is forced that $\sum (x_j)_i = r(M_j)$ for every $j$. Thus, every $x_j$  lies in the base polytope $P(M_j)$, implying the mixed integer decomposition property: 
\[
\left(\sum_{j=1}^{k} tP(M_j)\right)\cap\mathbb{Z}^{n}
=
\sum_{j=1}^{k}
\left(tP(M_j)\cap\mathbb{Z}^{n}\right).
\]

\end{remark} 
 
\begin{thm}\label{thm: main.Ehrhart}
    For any diagram $D=(D_1,\ldots,D_n)$, $i(\mathcal{S}_{D},t)=\prod_{j=1}^ni(P(SM_{d_j}(D_{j})),t)$ if and only if the movable intervals  of two different columns of $D$ intersect in at most one element.
\end{thm}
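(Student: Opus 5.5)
The plan is to compare both sides of the identity through the mixed integer decomposition property of Remark~\ref{rmk:IDP}. Write $P_j=P(SM_{d_j}(D_j))$. By Theorem~\ref{S_D and minkovski sum}, $t\mathcal{S}_D=\sum_j tP_j$. By Remark~\ref{rmk:IDP}, the addition map
\[
\Phi_t:\prod_{j=1}^n \big(tP_j\cap\mathbb{Z}^n\big)\longrightarrow t\mathcal{S}_D\cap\mathbb{Z}^n,\qquad (\mathbf{x}_1,\ldots,\mathbf{x}_n)\mapsto \mathbf{x}_1+\cdots+\mathbf{x}_n,
\]
is surjective for every $t\ge 0$. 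Hence $i(\mathcal{S}_D,t)\le\prod_j i(P_j,t)$ for all $t$, with equality for all $t$ if and only if $\Phi_t$ is injective for every $t$. Both implications will therefore be statements about uniqueness of decompositions of lattice points.

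For necessity, suppose two columns, say $D_1,D_2$, satisfy $|M(D_1)\cap M(D_2)|\ge 2$. I will reuse the construction from the proof of Theorem~\ref{thm: main.lattice-free}: there the lattice point $\overline{\v}$ was written as $\e_{T_1}+\e_{S_2}+\e_L=\e_{T_2}+\e_{S_1}+\e_L$ with $T_1\ne T_2$. These are two distinct preimages under $\Phi_1$. So $\Phi_1$ is not injective, which gives $i(\mathcal{S}_D,1)<\prod_j i(P_j,1)$, and the polynomials differ.

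For sufficiency, assume $|M(D_i)\cap M(D_j)|\le 1$ for all $i\ne j$, and fix $t$. First I record the shape of lattice points of $tP_j$. Every base $T\preceq D_j$ contains the rows of the up-justified (immovable) part of $D_j$ and lies inside these rows together with $M(D_j)$. So every $\mathbf{x}\in tP_j$ has coordinate $t$ on immovable rows of $D_j$ (in particular on all rows $<\min M(D_j)$), coordinate $0$ on rows $>d_j$, and coordinate sum $t|D_j|$. Empty and up-justified columns then contribute a single lattice point, and columns in different blocks (as in Case~2 of Theorem~\ref{thm: main.lattice-free}) vary in disjoint coordinates. So injectivity reduces to a single block $D_1,\ldots,D_k$ with $M(D_j)=[m_j,m_{j+1}]$ and $m_1<\cdots<m_{k+1}$.

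Now suppose $\sum_j\mathbf{x}_j=\sum_j\mathbf{y}_j$ within such a block. I will peel columns off inductively, as in the lattice-freeness proof:
\begin{itemize}
\item On rows $[m_1,m_2-1]$, every column $D_j$ with $j\ge2$ has coordinate exactly $t$ in both decompositions. Hence $\mathbf{x}_1$ and $\mathbf{y}_1$ agree on these rows.
\item They also agree on all rows outside $M(D_1)$.
\item Since both have coordinate sum $t|D_1|$, they agree on the single remaining row $m_2$, so $\mathbf{x}_1=\mathbf{y}_1$.
\end{itemize}
Subtracting $\mathbf{x}_1$ and repeating with $D_2$ (rows $[m_2,m_3-1]$ now free of other columns' variation) gives $\mathbf{x}_j=\mathbf{y}_j$ for all $j$. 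Thus $\Phi_t$ is injective.

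I expect the main obstacle to be the bookkeeping in this induction: verifying that after $D_1,\ldots,D_{r-1}$ are removed, the columns $D_{r+1},\ldots,D_k$ really are constant on $[m_r,m_{r+1}-1]$. This uses that each row $<m_j$ in column $D_j$ is a box, and it is where the hypothesis that intervals share at most the endpoint $m_{r+1}$ is essential.
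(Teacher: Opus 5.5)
Your proposal is correct and follows essentially the same route as the paper. Necessity uses the doubly decomposable midpoint $\overline{\v}$ from the proof of Theorem~\ref{thm: main.lattice-free}, together with surjectivity of the addition map from Remark~\ref{rmk:IDP}; sufficiency peels off the columns of each block, fixing the coordinates on $[m_j,m_{j+1}-1]$ and then row $m_{j+1}$ via the coordinate sum. Phrasing this as injectivity of $\Phi_t$ is a cleaner packaging of the paper's ``uniquely determined'' formulas, and it lets you treat the paper's Case~1 as blocks of size one.
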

\begin{proof}
We first prove the necessity. Suppose to the contrary that there are two columns in  $D$ whose movable intervals intersect in at least two elements. Without loss of generality, assume that the two columns $D_{1}$ and $D_{2}$ satisfy  $|M(D_{1}) \cap M(D_{2})|\ge 2$. By the proof of Theorem \ref{thm: main.lattice-free}, there is a non-vertex lattice point $\overline{\v}$ as in \eqref{mdp} can be obtained by at least two ways.
Hence, 
\[|\mathcal{S}_{D} \cap \mathbb{Z}^n| = \left| \left( \sum_{j=1}^{n} P(SM_{d_j}(D_{j})) \right) \cap \mathbb{Z}^n \right| < \prod_{j=1}^n |P(SM_{d_j}(D_{j})) \cap \mathbb{Z}^n|,\]
which is a contradiction.

Now we  prove the sufficiency.  Since every dilation of  matroid polytope $tP(M)$ is a  generalized permutohedron,  by Lemma \ref{minkowski_of_ gp} and Remark \ref{rmk:IDP}, we have  
    \begin{align}\label{eq3.1}
    t\mathcal{S}_{D}\cap \mathbb{Z}^{n}   &=\bigg( \sum _{j=1}^{n} tP(SM_{d_j}(D_{j}))\bigg)\cap\mathbb{Z}^{n}\nonumber\\
    &=\sum _{j=1}^{n}\bigg(tP(SM_{d_j}(D_{j}))\cap\mathbb{Z}^{n}\bigg).
    \end{align}   
It suffices to show that every lattice point in $t\mathcal{S}_{D}\cap \mathbb{Z}^{n}$ is uniquely determined in \eqref{eq3.1}, or equivalently,
    \begin{equation}\label{eq3.2}
    \left | t\mathcal{S}_{D}\cap \mathbb{Z}^{n}\right|=\prod_{j=1}^n\left|tP(SM_{d_j}(D_{j}))\cap\mathbb{Z}^{n}\right| .
    \end{equation}
    
By \eqref{eq3.1}, for any $\v \in t\mathcal{S}_{D}\cap \mathbb{Z}^{n}$, we can write 
\begin{equation}\label{eq5}
\v=\p_{1}+\p_{2}+\cdots+\p_{n} ,
\end{equation}
where $\p_j \in tP(SM_{d_j}(D_{j}))\cap \mathbb{Z}^{n}.$
For $1\le j \le n$, suppose that $SM_{d_j}(D_{j})$ has $s_{j}$ bases. Let ${B^{j}_{1}}, {B^{j}_{2}},\cdots,{B^{j}_{s_{j}}}$ be all the bases of $SM_{d_j}(D_{j})$, then $\{\e_{B^{j}_{i}}, i\in [s_{j}]\}$ are  all the vertices of $P(SM_{d_j}(D_{j}))$ and $\{t\e_{B^{j}_{i}}, i\in[s_{j}]\}$ are  all the vertices of $tP(SM_{d_j}(D_{j}))$, thus we have  
\begin{equation}\label{pjj}
  \p_{j}=  \sum _{i=1}^{s_{j}}\lambda_{i}t \e_{B^{j}_{i}}.  
\end{equation}
     where $0 \leq \lambda_{i}\leq 1$ for $i\in [s_{j}]$, and $\lambda_{1}+\lambda_{2}+\cdots+\lambda_{s_{j}}=1.$
 
There are two cases to consider.

\noindent {\bf Case 1}. The movable intervals of  columns in $D$ are pairwise disjoint.  Consider the $d$-th component $v_d$ of $\v=(v_1,\ldots,v_n)$ in \eqref{eq5}.  For any $1\le j\le n$, if $d<\min(M(D_j))$, then there is an immovable box in row $d$ of column $D_{j}$. Recall that ${B^{j}_{i}}$ is a basis of $SM_{d_j}(D_{j})$, let $e_{B^{j}_{i,d}}$ denote the $d$-th component of $\e_{B^{j}_{i}}$.  Then
$e_{B^{j}_{1,d}}=e_{B^{j}_{2,d}}=\cdots =e_{B^{j}_{s_{j},d}}=1$, so the $d$-th component of $\p_j$ is $p_{j,d}=t(\lambda_{1}+\lambda_{2}+\cdots+\lambda_{s_{j}})=t$. If $d>\max(M(D_j))$, then there is no box in row $d$ of column $D_{j}$, so $e_{B^{j}_{1,d}}=e_{B^{j}_{2,d}}=\cdots =e_{B^{j}_{s_{j},d}}=0$, hence $p_{j,d}=0$.

If $d\in M(D_{j})$, then by the assumption $M(D_{i})\cap M(D_{j})=\emptyset$ for any $i\neq j$, the position in row $d$ of column $D_{i}$ is empty or an immovable box. Equivalently,
$e_{B^{i}_{1,d}}=e_{B^{i}_{2,d}}=\cdots =e_{B^{i}_{s_{i},d}}=0$ or $1$ for each $i\neq j$. By equation \eqref{pjj}, we have $p_{i,d}\in \{0,t\}$, then we find 
\begin{equation}\label{p_{j,d}}
    p_{j,d}=v_{d}-g_{d}\cdot t ,
\end{equation}
where $g_{d}=\# \{i\in [n]:i\neq j, \min(M(D_{i}))>d\}$.

 Since the components of each $\p_{j}$ before $\min(M(D_{j}))$ are all $t$, the components after $\max( M(D_{j}))$ are all $0$, and the components within  $M(D_{j})$ can be uniquely determined by equation \eqref{p_{j,d}}, we can conclude that each $\p_j$ is uniquely determined, and so is $\v$.

\noindent {\bf Case 2}. There exist at least two columns in  $D$ whose movable intervals intersect in exactly one element. Then, as described in the proof of Theorem \ref{thm: main.lattice-free}, we can partition all columns of $D$ into several blocks $\bigcup_{i=1}^{d} \mathcal{D}_{i}$.
Without loss of generality, we only analyze the columns in $\mathcal{D}=\{D_1,\ldots,D_k\}$ as shown in Figure \ref{fig:n}, and $\p_j\in tP(SM_{d_j}(D_{j}))$ for $j\in[k]$.

Since the movable intervals of  columns in $\mathcal{D}$ and  columns in $D \setminus \mathcal{D}$ do not intersect, we find that for any $i\notin [1, k]$ and $d\in[m_{1}, m_{k+1}]$, where $m_{j}=\min(M(D_{j}))$ for $1\leq j \leq k$, and $m_{k+1}=\max(M(D_{k}))$. Thus we have $p_{i,d}\in \{0,t\}$.

Now we analyze the $d$-th components of $\p_{1}$ for $d\in[m_{1}, m_{2}-1]$. Since $M(D_{1})\cap M(D_{2})=\{m_{2}\}$, it  is easy to see that the $d$-th components of $\p_{2},\p_{3},\ldots,\p_{k}$ are all equal to $t$, then we have 
\begin{equation}\label{ p_{1,d}}
          p_{1,d}=v_d-(k-1)\cdot t-\ell \cdot t,
\end{equation}
where $\ell=\# \{j\in [n]:\min(M(D_{j}))>m_{k+1}\}$.

Next we consider $p_{1,m_{2}}$.
    Suppose that the number of boxes in rows $m_{1},m_{1}+1,\ldots,m_{2}$ of $D_{1}$ is $k_{1}$. Then the sum of the $m_{1},m_{1}+1,\ldots,m_{2}$-th components of any basis vector of $SM_{d_1}(D_{1})$ is $k_{1}$.
 Since
    \begin{equation*}
        \p_{1}= \sum _{i=1}^{s_{1}}\lambda_{i}t e_{B^{1}_{i}},
    \end{equation*}
we have
 \[p_{1, m_{1}}+p_{1, m_{1}+1}+\cdots+p_{1, m_{2}}=\lambda_{1}tk_{1}+\lambda_{2}tk_{1}+\cdots+\lambda_{s_{1}}tk_{1}=tk_{1}.\]
Thus
\begin{equation}\label{p_1m2}
    p_{1, m_{2}}=tk_{1}-(p_{1, m_{1}}+p_{1, m_{1}+1}+\cdots+p_{1, m_{2}-1}).
\end{equation}
   Furthermore, since the boxes in rows $1,2,\ldots,m_{1}-1$ of $D_{1}$ are immovable and there are no boxes in rows $m_{2}+1,\ldots,n$, we find $p_{1,d}=t$ for $d\le m_1-1$ and $p_{1,d}=0$ for $d>m_2$. 

Similarly,
\begin{equation}\label{p_{2, m_{2}}}
    p_{2, m_{2}}=v_{m_{2}}-(k-2)\cdot t-\ell \cdot t-p_{1, m_{2}}.
\end{equation}
The analysis for the $m_{2}+1,\ldots,m_{3}$-th components of $\p_{2}$ is  the same with the $m_{1},\ldots,m_{2}$-th components of $\p_{1}$.

Observe the components of $\p_{1}$: (i)  components    before $\min(M(D_{1}))$ are all $t$, (ii) components after $\max( M(D_{1}))$ are all $0$, (iii) $p_{1, d}$ is uniquely determined by equation \eqref{ p_{1,d}} for  $d\in [m_{1}, m_{2}-1]$, and (iv) $p_{1, m_{2}}$ is uniquely determined by equation \eqref{p_1m2}, we can conclude that $\p_1$ is uniquely determined. Similarly,  each $\p_{i}$ for $i\in [k]$ is uniquely determined. Therefore,    any $ \v\in t\mathcal{S}_{D}\cap \mathbb{Z}^{n} $ in equation \eqref{eq5} is uniquely determined.
\end{proof}

\section{Applications}

In this section, we specialize the results for general diagrams in Section 3 and Section 4 to the Rothe diagram of permutations and the skyline diagram of compositions. Furthermore, we apply our lattice-free criteria to prove several recent conjectures regarding the support of Grothendieck polynomials.

\subsection{Newton polytopes of Schubert polynomials}

Recall that the Rothe diagram $D(w)$ of $w\in S_n$ is the set of boxes in the $n\times n$ grid which are not crossed by the $n$ hooks with corner at $(i,w(i))$ for $i\in[n]$.

\begin{lem}[\cite{FMD1}]\label{lem2}
Let $w \in S_n$ and $\sigma\in S_m$, and $w$ contains the pattern $\sigma$. Choose a realization $j_1 < j_2 < \cdots < j_m$ of $\sigma$ in $w$. Then $D(\sigma)$ is obtained from $D(w)$ by deleting the rows $[n]\backslash\{j_1, \ldots, j_m\}$ and the columns $[n]\backslash\{w(j_1), \ldots, w(j_m)\}$, and reindexing the remaining rows and columns by $[m]$, preserving their order.
\end{lem}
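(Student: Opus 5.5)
The plan is to verify the claim directly from the definition
\[
D(w)=\{(i,j): w(i)>j \text{ and } w^{-1}(j)>i\},
\]
without any geometric or polytope machinery. The selected rows are $j_1<\cdots<j_m$, and the selected columns are the values $w(j_1),\ldots,w(j_m)$. I would parametrize the selected columns by the positions rather than by their natural order, writing column $c_b:=w(j_b)$ for $b\in[m]$.

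\emph{Step 1: the reindexing maps.} Row $j_a$ goes to $a$; this is clearly order preserving. For columns, the value $w(j_b)$ must go to $\sigma(b)$. This map is order preserving because $w(j_1),\ldots,w(j_m)$ have the same relative order as $\sigma(1),\ldots,\sigma(m)$: we have $w(j_b)<w(j_{b'})$ if and only if $\sigma(b)<\sigma(b')$. So the order-preserving relabeling of the $m$ surviving columns by $[m]$ sends $w(j_b)$ to $\sigma(b)$.

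\emph{Step 2: membership test.} Fix $a,b\in[m]$ and consider the grid position $(j_a, w(j_b))$, which lies in a surviving row and a surviving column. By definition, $(j_a,w(j_b))\in D(w)$ if and only if
\[
w(j_a)>w(j_b) \quad\text{and}\quad w^{-1}(w(j_b))=j_b>j_a .
\]
By pattern containment, the first condition is equivalent to $\sigma(a)>\sigma(b)$. Since the $j$'s are increasing, the second condition is equivalent to $b>a$, that is, $\sigma^{-1}(\sigma(b))>a$. These are exactly the two conditions for $(a,\sigma(b))\in D(\sigma)$.

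\emph{Step 3: conclusion.} Every position of the $m\times m$ grid has the form $(a,\sigma(b))$, since $\sigma$ is a bijection. Step 2 then shows that the relabeling of Step 1 is a bijection between the boxes of $D(w)$ lying in the surviving rows and columns and the boxes of $D(\sigma)$. Deleting the other rows and columns removes nothing else, which proves the lemma.

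The only point requiring care is Step 1: one must check that relabeling the surviving columns in increasing order sends $w(j_b)$ to $\sigma(b)$, rather than to $b$. Once this is in place, everything else is a one-line translation of the two inequalities.
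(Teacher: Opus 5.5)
Your proof is correct and complete. The paper gives no proof of this lemma; it imports it from \cite{FMD1}. Your argument is the standard direct check from the definition $D(w)=\{(i,j): w(i)>j,\ w^{-1}(j)>i\}$, and it has two ingredients. First, the order-preserving relabeling of the surviving columns sends $w(j_b)$ to $\sigma(b)$, and you correctly flag this as the one point needing care. Second, at the position $(j_a,w(j_b))$ the two defining inequalities become $w(j_a)>w(j_b)\iff\sigma(a)>\sigma(b)$ and $j_b>j_a\iff b>a$. These are exactly the conditions for $(a,\sigma(b))\in D(\sigma)$, so the relabeled restriction of $D(w)$ is $D(\sigma)$.
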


Corollary \ref{cor: cor 1.2} follows from Theorem \ref{thm: the main thm} and Proposition \ref{Schubert} below.

\begin{prop}\label{Schubert}
For a permutation $w\in S_n$, the following  conditions are equivalent:
\begin{enumerate}
  \item [$(1)$] The movable intervals of different columns in $D(w)$ are pairwise disjoint;
  \item [$(2)$] $w$ avoids the three permutation patterns 1423, 1432, 13254;
  \item [$(3)$] For any hook in $D(w)$, there is at most one column to its lower right containing boxes of $D(w)$.
\end{enumerate}
\end{prop}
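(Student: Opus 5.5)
The plan is to prove $(1)\Leftrightarrow(3)$ directly from the description of $D(w)$ by hooks, and $(1)\Leftrightarrow(2)$ using Lemma~\ref{lem2}. Throughout I use the following description of a column $j$ of $D(w)$. Row $i$ carries a box iff $w(i)>j$ and $i<w^{-1}(j)$. For $i<w^{-1}(j)$, row $i$ is empty in column $j$ iff $w(i)<j$, and this is exactly the case in which the hook of $(i,w(i))$ crosses column $j$ in row $i$. Hence the box at $(r,j)$ is movable iff there is some $i<r$ with $w(i)<j$, and then the hook at $(i,w(i))$ has the box $(r,j)$ to its lower right. So $M(D_j)$ is the interval from the first row $a<w^{-1}(j)$ with $w(a)<j$ down to the last box of column $j$.

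For $(3)\Rightarrow(1)$ I argue by contrapositive. Suppose $r\in M(D_j)\cap M(D_{j'})$ with $j<j'$. Then each column has an empty cell weakly above row $r$, at rows $a\le r$ and $a'\le r$ with $w(a)<j$ and $w(a')<j'$. Each column also has a box weakly below row $r$, and these boxes are strictly below $a$ and $a'$ respectively. Take $i=\max(a,a')$; I will adjust this choice so that the hook at $(i,w(i))$ has boxes of both columns to its lower right. This contradicts (3).

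For $(1)\Rightarrow(3)$, suppose the hook at $(i,w(i))$ has boxes in two columns $c<c'$ strictly to its lower right. The cells $(i,c)$ and $(i,c')$ lie on the hook, so they are empty, and they sit above boxes of the same column. Both movable intervals therefore contain row $i$, so $M(D_c)\cap M(D_{c'})\neq\emptyset$, contradicting (1).

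For $(1)\Rightarrow(2)$, again argue by contrapositive. I will draw $D(1423)$, $D(1432)$ and $D(13254)$ and check that each has two columns with intersecting movable intervals. For example, in $D(1423)$ the columns $2$ and $3$ both have $M=\{1,2\}$. If $w$ contains one of these patterns, Lemma~\ref{lem2} embeds the pattern's diagram into $D(w)$ by an order-preserving choice of rows and columns. Under this embedding boxes go to boxes. Empty cells lying above boxes also go to empty cells, because the corresponding hook in $\sigma$ comes from a hook in $w$. Since movable intervals are intervals, each one in $D(\sigma)$ maps into the corresponding one in $D(w)$, and the intersection persists.

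The main obstacle is $(2)\Rightarrow(1)$: extracting one of the three patterns from overlapping intervals. Take columns $j<j'$ sharing a row $r$, with witnesses $a,a'\le r$ as above and box rows $b\ge r$ in column $j$ and $b'\ge r$ in column $j'$. The entries $w(a)<j<w(b)$ with $w^{-1}(j)>b$ give a $132$-type configuration $(a,b,w^{-1}(j))$, and similarly for $j'$. The plan is to merge these two configurations. First minimise the number of distinct positions involved by choosing the witnesses extremally, for instance taking $a$ as small as possible, and using the $1$ from one configuration for the other whenever possible. Then split according to the relative order of $b,b'$ and of $w^{-1}(j),w^{-1}(j')$. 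When a common $1$ exists, the four remaining positions should produce $1423$ or $1432$, depending on whether $w^{-1}(j)<w^{-1}(j')$ or not. When no common $1$ is available, meaning the witness for $j'$ lies to the right of every small entry before $b$, the two configurations are forced apart and should produce $13254$. I expect this case split to need the most care, particularly in checking that the five-point case cannot collapse into a smaller pattern outside the list.
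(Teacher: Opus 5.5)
\textbf{Verdict: there is a genuine gap in $(2)\Rightarrow(1)$; the other directions are sound.} Your proofs of $(1)\Leftrightarrow(3)$ and of $(1)\Rightarrow(2)$ via Lemma~\ref{lem2} work. In $(3)\Rightarrow(1)$ no adjustment of $\max(a,a')$ is needed. Take $a$ to be the top of $M(D_j)$ for the \emph{left} column $j$. Then $w(a)<j<j'$ makes $(a,j')$ empty too, so the hook at $(a,w(a))$ has boxes of both columns strictly below it; this is also the paper's argument. For $(2)\Rightarrow(1)$, however, you give only a plan, and the plan rests on the wrong dichotomy. The observation just made shows that a common ``1'' \emph{always} exists. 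So your branch ``no common $1$ is available'' is empty and cannot be where $13254$ comes from. The claim that a common $1$ yields $1423$ or $1432$ is also false. Take $w=13254$ with columns $2$ and $4$: here $M(D_2)=[1,2]$ and $M(D_4)=[1,4]$, and position $1$ is a common $1$. Since $w^{-1}(2)=3<w^{-1}(4)=5$, your recipe predicts $1423$, yet $13254$ contains neither $1423$ nor $1432$. With a common $1$ and distinct box rows $b\neq b'$ you generally have five positions, and your plan gives no mechanism for extracting one of the three patterns from them.

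The missing idea is that the pattern is decided by whether the two columns have boxes in a \emph{common row} below the hook, i.e.\ a common large entry. The other case is handled with the vertical ray of the left column. Since you already have $(1)\Leftrightarrow(3)$, route the argument through (3), as the paper does:
\begin{itemize}
\item Fix the hook at $(i,w(i))$. Let $A=(i_1,c)$ be a box in the highest row among the boxes to its lower right, and let $B=(i_2,c')$ be the topmost such box in another column, so $i_2\ge i_1$.
\item If $i_2=i_1$, then $w(i)<c,c'<w(i_1)$ and $w^{-1}(c),w^{-1}(c')>i_1$. This gives $1423$ or $1432$ according to the order of $w^{-1}(c)$ and $w^{-1}(c')$.
\item If $i_2>i_1$, then $(i_1,c')$ is empty with a box below it, which forces $c<w(i_1)<c'$.
\item In that case, if $(i_2,c)$ is a box you are back in the same-row case.
\item Otherwise, because $w(i_2)>c'>c$, the vertical ray of column $c$ must start at $i_3=w^{-1}(c)$ with $i_1<i_3<i_2$. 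The positions $i<i_1<i_3<i_2<w^{-1}(c')$ then carry the values $w(i),w(i_1),c,w(i_2),c'$, which form the pattern $13254$.
\end{itemize}
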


\begin{proof}
  If there exists a hook whose lower right side has two columns containing boxes of $D(w)$, then the movable intervals of these two columns must intersect in at least two elements, so $(1) \Rightarrow (3)$ holds. Conversely, if there exist two columns $D_i, D_j(i<j)$ in the Rothe diagram $D(w)$ such that $|M(D_i)\cap M(D_j)| \geq 1$. Assume $M(D_i)=[c_1, c_2]$, we know that $D(w)$ has no box at $(c_1,i)$ but has a box at $(c_2,i)$. Therefore, a horizontal ray is present at $(c_1, i)$, and there must be no box at $(c_1, j)$. Furthermore, since $|M(D_i)\cap M(D_j)| \geq 1$, there exists $c_3 \in [c_1 +1, n]$ such that there is a box at $(c_3,j)$. Therefore, the hook with corner at $(c_1,w(c_1))$ has at least two columns to its lower right containing boxes of $D(w)$.
  Hence $(3) \Rightarrow (1)$ holds. So conditions (1) and (3) are equivalent.

Since in $D(1423), D(1432), D(13254)$ there exists a hook whose lower right side has two columns of boxes. It follows from Lemma \ref{lem2} that if $w$ contains one of the permutation patterns 1423, 1432, 13254, then there exists a hook in $D(w)$ whose lower right side has two columns of boxes, i.e., $(3) \Rightarrow (2)$ holds.

Finally, we aim to prove $(2)\Rightarrow (3)$, which is equivalent to proving its contrapositive: if there is a hook $L$ with corner at $(i,w(i))$, whose lower right side has at least two columns containing boxes in $D(w)$, then $w$ must contain one of the permutation patterns $1423, 1432, 13254$. 

Let $A=(i_1,a)$ be the leftmost and topmost box in the lower right region of the hook $L$. Then $i_1>i$ and $a>w(i)$. Let $B$ be the topmost box in another column under the hook $L$.
There are two cases for the position of $B$.

\noindent{\bf Case 1.} $B$ is in the same row of $A$.
Suppose that $B=(i_1,b)$ with $b>a$. Then there is a hook with corner in row $i_1$ and to the right of $B$. And there are two hooks with corners 
in the same column of $A$ and $B$ with relative positions illustrated in Figure \ref{fig:comparison}. It is easy to see that $w$ contains one of the patterns $1432$ or $1423$.
\begin{figure}[htbp]
  \centering
  \begin{subfigure}[b]{0.45\textwidth}
    \centering
    \begin{tikzpicture}[scale=0.7]
      \draw[line width=1pt](0mm, 0mm)--(0mm,44mm)--(56mm, 44mm);    
      \draw[fill=gray](8mm, 28mm)--(16mm, 28mm)--(16mm, 36mm)--(8mm, 36mm);
      
      \draw[fill=gray](24mm, 28mm)--(32mm, 28mm)--(32mm, 36mm)--(24mm, 36mm);
     
      \draw[line width=1pt] (36mm, 0mm)--(36mm, 32mm)--(56mm, 32mm);
      \draw[line width=1pt] (12mm, 0mm)--(12mm, 24mm)--(56mm, 24mm);
      \draw[line width=1pt] (28mm, 0mm)--(28mm, 16mm)--(56mm, 16mm);
      \node[left] at (0mm, 44mm){$i$};
      \node[left] at (0mm, 32mm){$i_1$};
   
      \node[above] at (12mm, 44mm){$a$};
      \node[above] at (28mm, 44mm){$b$};
      \end{tikzpicture}
  \end{subfigure}
    \hspace{0.1cm} 
  \begin{subfigure}[b]{0.45\textwidth}
    \centering
    \begin{tikzpicture}[scale=0.7]
      \draw[line width=1pt](0mm, 0mm)--(0mm,44mm)--(56mm, 44mm);    
      \draw[fill=gray](8mm, 28mm)--(16mm, 28mm)--(16mm, 36mm)--(8mm, 36mm);
      
      \draw[fill=gray](24mm, 28mm)--(32mm, 28mm)--(32mm, 36mm)--(24mm, 36mm);
      \draw[line width=1pt] (40mm, 0mm)--(40mm, 32mm)--(56mm, 32mm);
      \draw[line width=1pt] (12mm, 0mm)--(12mm, 14mm)--(56mm, 14mm);
      \draw[line width=1pt] (28mm, 0mm)--(28mm, 24mm)--(56mm, 24mm);
      \node[left] at (0mm, 44mm){$i$};
      \node[left] at (0mm, 32mm){$i_1$};
      \node[above] at (12mm, 44mm){$a$};
      \node[above] at (28mm, 44mm){$b$};
    \end{tikzpicture}
  \end{subfigure}
  \caption{The illustration of Case 1}
  \label{fig:comparison}
\end{figure}

\noindent{\bf Case 2.} $B$ is not in the same row of $A$. 
Then $B$ must locate in a lower row of $A$. Let $B=(i_2,b)$ with $i_2>i_1$ and $b>a$. Then there is a hook with corner at $(i_{1},w(i_1))$ with $b>w(i_{1})>a$. Now consider the position at $(i_{2},a)$.
If there is a box at $(i_2,a)$, then delete row $i_1$ and the column $w(i_1)$, the discussion returns to Case 1, see the left figure in Figure \ref{fig:comparison2}. By Lemma \ref{lem2}, $w$ contains one of the patterns $1432$ or $1423$.

If there is no box at $(i_2,a)$, then this position must be crossed by a vertical ray. This implies that there is a hook with corner at $(i_{3},a)$ with $i_{3}<i_{2}$. Moreover, due to the existence of $B$, there is a hook with corner in row $i_2$ to the right of $B$ and a hook with corner under $B$ with relative positions illustrated in the right figure of Figure \ref{fig:comparison2}. It is easy to see that $w$ contains the pattern $13254$.
\end{proof}

\begin{figure}[htbp]
  \centering
  \begin{subfigure}[b]{0.45\textwidth}
    \centering
    \begin{tikzpicture}[scale=0.7]
     \draw [line width=1pt] (0mm, 0mm) -- (0mm, 52mm) -- (60mm, 52mm);
\draw [line width=1pt] (24mm, 0mm) -- (24mm, 40mm) -- (60mm, 40mm);
\draw[fill=gray] (8mm, 36mm) -- (16mm, 36mm) -- (16mm, 44mm) -- (8mm, 44mm) -- cycle;
\draw[fill=gray] (8mm, 20mm) -- (16mm, 20mm) -- (16mm, 28mm) -- (8mm, 28mm) -- cycle;
\draw[fill=gray] (32mm, 20mm) -- (40mm, 20mm) -- (40mm, 28mm) -- (32mm, 28mm) -- cycle;
\node[left] at (0mm, 52mm) {$i$};
\node[left] at (0mm, 40mm) {$i_1$};
\node[left] at (0mm, 24mm) {$i_2$};

\node[above] at (12mm, 52mm) {$a$};
\node[above] at (36mm, 52mm) {$b$};
      \end{tikzpicture}
  \end{subfigure}
    \hspace{0.1cm} 
 \begin{subfigure}[b]{0.45\textwidth}
    \centering
    \begin{tikzpicture}[scale=0.7]
\draw[line width=1pt](0mm, 0mm)--( 0mm,52mm)--(60mm, 52mm);    
\draw[fill=gray](8mm, 36mm)--(16mm, 36mm)--(16mm, 44mm)--(8mm, 44mm);

\draw [line width=1pt] (24mm, 0mm)--(24mm, 40mm)--(60mm, 40mm);
\draw[line width=1pt] (12mm, 0mm)--(12mm, 28mm)--(60mm, 28mm);
\draw[fill=gray](30mm, 14mm)--(38mm, 14mm)--(38mm, 22mm)--(30mm,22mm);
\draw [line width=1pt] (44mm, 0mm)--(44mm, 18mm)--(60mm, 18mm);
\draw [line width=1pt] (34mm, 0mm)--(34mm, 8mm)--(60mm, 8mm);
\node[left ] at (0mm, 52mm){$i$};
\node[left] at (0mm, 40mm){$i_1$};
\node[left] at (0mm, 28mm){$i_3$};   
\node[left] at (0mm, 16mm){$i_2$};
\node[above ] at (12mm, 52mm){$a$};
\node[above] at (36mm, 52mm){$b$};
\end{tikzpicture}
  \end{subfigure}
  \caption{The illustration of Case 2}
  \label{fig:comparison2}
\end{figure}

\subsection{Newton polytopes of Grothendieck polynomials}

Recently, M\'esz\'aros, Setiabrata, and St.\,Dizier \cite{MSSD} conducted a systematic study on the support $\supp(\mathfrak{G}_w)$ of Grothendieck polynomials. Equipping $\supp(\mathfrak{G}_w) \subset \mathbb{Z}^n$ with the componentwise comparison order $\alpha \le \beta$, they proposed several conjectures (Conjectures 1.1--1.6 in \cite{MSSD}) detailing the combinatorial structure and saturatedness of the Newton polytopes. 

In this subsection, we show that the Newton polytope $\Newton(\mathfrak{G}_w)$ becomes lattice-free if and only if $w$ avoids the permutation patterns $1432, 1423, 13254$. Furthermore, the geometric consequences of this avoidance allow us to  verify all the aforementioned conjectures for this class of permutations. 

Before presenting the main theorems, we need a   lemma revealing that the geometric bounding box of $\Newton(\mathfrak{G}_w)$ is strictly constrained by that of $\mathcal{S}_w=\Newton(\S_w)$. Let $d_i(f)$ denote the maximum degree of the variable $x_i$ appearing in a polynomial $f$.

\begin{lem}\label{lem:max_degree_grothendieck}
For any permutation $w \in S_n$ and any $i \in [n]$, the maximum degree of $x_i$ in the Grothendieck polynomial $\mathfrak{G}_w(x)$ is exactly equal to its maximum degree in the Schubert polynomial $\S_w(x)$, i.e.,
\[
d_i(\mathfrak{G}_w) = d_i(\S_w).
\]
\end{lem}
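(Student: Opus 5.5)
We will prove $d_i(\mathfrak{G}_w)\ge d_i(\S_w)$ and $d_i(\mathfrak{G}_w)\le d_i(\S_w)$ separately.

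\emph{Lower bound.} Since $\S_w$ is the lowest-degree homogeneous component of $\mathfrak{G}_w$, every monomial of $\S_w$ appears in $\mathfrak{G}_w$ with the same nonzero coefficient, up to the sign $(-1)^{\ell(w)}$ coming from the normalization. The components of different degrees cannot cancel. Hence $\supp(\S_w)\subseteq\supp(\mathfrak{G}_w)$, and so $d_i(\mathfrak{G}_w)\ge d_i(\S_w)$.

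\emph{Upper bound, via pipe dreams.} We use the combinatorial formula for Grothendieck polynomials due to Knutson--Miller and Fomin--Kirillov. It writes $\mathfrak{G}_w$ as a signed sum over (not necessarily reduced) pipe dreams $P$ whose Demazure product is $w$, with weight $x^{\mathrm{wt}(P)}$, where $\mathrm{wt}(P)_i$ is the number of crosses in row $i$. The Schubert polynomial is the same sum restricted to reduced pipe dreams.

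The key step is a row-wise bound. Reading the crosses of $P$ in the standard order (right to left within a row, rows top to bottom) gives a word $\mathbf{a}$ whose Demazure product is $w$. The crosses of row $i$ occupy positions $(i,j)$ and carry letters $i+j-1$, which strictly decrease as we read, all of them at least $i$. We will show that the number of crosses in row $i$ is at most $\max_{P' \text{ reduced}}\mathrm{wt}(P')_i$. For this we pass from $P$ to a reduced subword with the same Demazure product, deleting letters that do not change the Demazure product, and we must do this without lowering the row-$i$ count.

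The tool is an exchange argument, used to arrange that all row-$i$ letters survive. A decreasing sequence of distinct letters is always reduced, so the row-$i$ letters by themselves form a reduced segment. Concretely, we write $\mathbf{a}=\mathbf{u}\,\mathbf{r}\,\mathbf{v}$, where $\mathbf{r}$ is the row-$i$ segment. Then $\delta(\mathbf{a})=\delta(\mathbf{u})\ast\delta(\mathbf{r})\ast\delta(\mathbf{v})$, and we can choose reduced subwords $\mathbf{u}'\subseteq\mathbf{u}$ and $\mathbf{v}'\subseteq\mathbf{v}$ with $\delta(\mathbf{u}')=\delta(\mathbf{u})$ and $\delta(\mathbf{v}')=\delta(\mathbf{v})$. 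Demazure products of words have the property that some reduced subword realizing the product contains any prescribed reduced factor. We would prove this by induction using $u\ast s=us$ or $u$.

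\emph{Main obstacle.} The resulting reduced word $\mathbf{u}''\,\mathbf{r}\,\mathbf{v}''$ must still be the reading word of a reduced pipe dream, with the same row-$i$ crosses. A deleted cross removes a tile, and the compatible sequences for the other rows are unaffected, so we expect this to hold. To settle it rigorously, we would instead invoke the known fact that $\supp(\mathfrak{G}_w)$ lies in the order ideal generated by $\supp(\S_w)$ shifted upward by the $K$-theoretic ``extra'' crosses. If that route stalls, our fallback is the transition/recursion $\mathfrak{G}_w=\partial_i(1-x_{i+1})\mathfrak{G}_{ws_i}$. We would induct downward from $w_0$, tracking $d_j$, and compare with the analogous recursion for $\S_w$. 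Along the way we would use Theorem~\ref{S_D and minkovski sum}, which gives $d_j(\S_w)$ as the number of columns of $D(w)$ having a box in some row $\ge j$ of their movable interval, or an immovable box in row $j$. We would then check that $\partial_i(1-x_{i+1})$ raises no row maximum beyond the corresponding Schubert bound.
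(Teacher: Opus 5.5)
Your main route is correct and differs from the paper's on the upper bound; your lower bound is the same as the paper's. For the upper bound, the paper cites \cite[Theorem 1.2]{MSSD22}: every $\alpha\in\supp(\G_w)$ satisfies $\alpha\le \mathrm{wt}(\overline{D(w)})$ componentwise. It then uses Theorem~\ref{S_D and minkovski sum} and the Gale order to compute $d_i(\S_w)=\#\{j: i\le \max D_j\}=\mathrm{wt}(\overline{D(w)})_i$. You instead prove a pipe-dream-level statement. Every possibly non-reduced pipe dream $P$ with Demazure product $w$ contains a reduced pipe dream $P'\subseteq P$ for $w$ with exactly the same crosses in row $i$. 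This gives $\mathrm{wt}(P)_i\le d_i(\S_w)$ directly. Your argument is self-contained: it uses only the Fomin--Kirillov/Knutson--Miller formula and elementary $0$-Hecke facts, and it gives a finer statement about individual pipe dreams. The paper's argument relies on a nontrivial external theorem, but it yields the explicit value $d_i(\G_w)=\mathrm{wt}(\overline{D(w)})_i$ as a byproduct.

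Two remarks to tighten your write-up. First, what you call the main obstacle is not one. A pipe dream is just a subset of the staircase, and its reading word is determined by the positions of its crosses. Deleting letters of $\mathbf{u}$ and $\mathbf{v}$ therefore means turning those crosses into elbows, and the resulting $P'\subseteq P$ has reading word $\mathbf{u}''\mathbf{r}\mathbf{v}''$ by definition. So you should drop the vague ``order ideal'' fact and the recursion fallback; neither is needed, and neither is developed enough to serve as a proof. Second, prove the exchange lemma by peeling letters off the outer ends, by induction on $|\mathbf{u}|+|\mathbf{v}|$ starting from the reduced word $\mathbf{r}$. Associativity gives $\delta(s\mathbf{b})=s\ast\delta(\mathbf{b})$ and $\delta(\mathbf{b}s)=\delta(\mathbf{b})\ast s$. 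At each step you either prepend or append $s$ to the current reduced word (when the length goes up), or skip $s$ (when the product is unchanged). This directly produces a reduced $\mathbf{u}''\mathbf{r}\mathbf{v}''$ with product $\delta(\mathbf{a})=w$, so there is no need to pass first through reduced $\mathbf{u}',\mathbf{v}'$.
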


\begin{proof}
Since $\S_w(x)$ is the lowest-degree homogeneous component of $\mathfrak{G}_w(x)$, we naturally have $d_i(\S_w) \le d_i(\mathfrak{G}_w)$.
By \cite[Theorem 1.2]{MSSD22}, for any $\alpha \in \supp(\mathfrak{G}_w)$, we have $\alpha \le \mathrm{wt}(\overline{D(w)})$, where $\overline{D(w)}$ is the upper closure of the Rothe diagram $D(w)$. Hence, $d_i(\mathfrak{G}_w) \le \mathrm{wt}(\overline{D(w)})_i$.
On the other hand, by Theorem \ref{S_D and minkovski sum}, $\Newton(\S_w) = \sum_{j=1}^n P(SM_{d_j}(D_j))$. The maximum $i$-th coordinate in the Schubert matroid polytope $P(SM_{d_j}(D_j))$ is 1 if and only if $i$ is an element in a basis of $SM_{d_j}(D_j)$. Since the bases are subsets $T$ satisfying $T \preceq D_j$ in Gale order, $i$ can be included in a basis if and only if $i \le \max(D_j)$. This inequality $i \le \max(D_j)$ is precisely the defining condition for a box to appear in the $j$-th column of the upper closure $\overline{D(w)}$. Therefore, the maximum $i$-th coordinate in the Minkowski sum $\Newton(\S_w)$ is exactly the number of columns $j$ such that $i \le \max(D_j)$, which is exactly  $\mathrm{wt}(\overline{D(w)})_i$. This implies $d_i(\S_w) = \mathrm{wt}(\overline{D(w)})_i$, and consequently $d_i(\mathfrak{G}_w) \le d_i(\S_w)$.
\end{proof}

\begin{thm}\label{thm:Grothendieck_lattice_free}
For a permutation $w \in S_n$, the Newton polytope $\Newton(\mathfrak{G}_w)$ is lattice-free if and only if $w$ avoids the patterns $1432, 1423, 13254$.
\end{thm}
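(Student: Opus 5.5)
The plan is to prove both directions by comparing $\Newton(\G_w)$ with $\mathcal{S}_w=\Newton(\S_w)$. For necessity I would use the fact that $\S_w$ is the lowest-degree homogeneous component of $\G_w$. Hence $\mathcal{S}_w$ is exactly the face of $\Newton(\G_w)$ on which the linear functional $\t\mapsto\sum_i t_i$ attains its minimum $\ell(w)$.

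Suppose $w$ contains one of $1423,1432,13254$. By Proposition \ref{Schubert}, two columns of $D(w)$ then have movable intervals meeting in at least two elements. Theorem \ref{thm: main.lattice-free} then supplies a lattice point $\overline{\v}\in\mathcal{S}_w$ that is not a vertex of $\mathcal{S}_w$. Every vertex of a face is a vertex of the whole polytope, and conversely every vertex of $\Newton(\G_w)$ lying in a face is a vertex of that face. So $\overline{\v}$ is a non-vertex lattice point of $\Newton(\G_w)$, and $\Newton(\G_w)$ is not lattice-free.

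For sufficiency, assume $w$ avoids the three patterns, so by Proposition \ref{Schubert} the movable intervals of the columns of $D(w)$ are pairwise disjoint. The goal is to trap $\Newton(\G_w)$ in a translate $\bm{c}+\prod_i[0,\varepsilon_i]$ with each $\varepsilon_i\in\{0,1\}$. Once this holds, any lattice point of $\Newton(\G_w)$ is a vertex of this box. A vertex of a box that contains the polytope is an extreme point of the polytope whenever it belongs to it, so it is a vertex of $\Newton(\G_w)$, which gives lattice-freeness.

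The box is built coordinatewise, as follows.
\begin{enumerate}
\item[(a)] \emph{Upper bound.} By Lemma \ref{lem:max_degree_grothendieck}, every exponent $\alpha\in\supp(\G_w)$ satisfies $\alpha_i\le d_i(\S_w)$. From that proof, $d_i(\S_w)$ equals the number of columns $j$ with $i\le\max D_j$.
\item[(b)] \emph{Lower bound.} I would invoke that every monomial of $\G_w$ is divisible by some monomial of $\S_w$. Via $K$-theoretic pipe dreams, every pipe dream for $\G_w$ contains a reduced pipe dream for $w$, so each $\alpha\in\supp(\G_w)$ dominates some $\beta\in\supp(\S_w)$ componentwise. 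Hence $\alpha_i\ge c_i:=\min\{\beta_i:\beta\in\mathcal{S}_w\cap\mathbb{Z}^n\}$.
\item[(c)] \emph{Computing $c_i$.} By Theorem \ref{S_D and minkovski sum}, $c_i$ is the sum over columns of the minimum of the $i$-th coordinate over bases $T\preceq D_j$. That minimum is $1$ exactly when row $i$ holds an immovable box of $D_j$, i.e.\ $i<\min M(D_j)$ with $i\in D_j$. It is $0$ otherwise: if $i\in M(D_j)$, push the boxes of $D_j$ upward into empty positions so as to vacate row $i$.
\item[(d)] \emph{Width of the box.} Consequently $d_i(\S_w)-c_i$ equals the number of columns $j$ with $i\in M(D_j)$, and disjointness makes this at most $1$.
\end{enumerate}

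The main obstacle is step (b), the componentwise domination of $\supp(\G_w)$ by $\supp(\S_w)$. I would justify it through the pipe-dream model: deleting redundant crosses from a non-reduced $K$-theoretic pipe dream leaves a reduced one with fewer crosses in each row. If the reader prefers to avoid pipe dreams, an alternative is an induction on the recursion $\G_w=\partial_i(1-x_{i+1})\G_{ws_i}$, which needs more care. A secondary check is step (c), in particular the boundary cases where $i$ equals $\min M(D_j)$ or $\max M(D_j)$. At $i=\max M(D_j)=\max D_j$ the bottom box can always be moved up to the topmost empty position above it, so that row can indeed be vacated.
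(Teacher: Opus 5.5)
Your proposal is correct and follows the paper's proof. For necessity you realize $\mathcal{S}_w$ as the minimal-degree face of $\Newton(\G_w)$. For sufficiency you trap $\Newton(\G_w)$ in a translated unit box, using Lemma \ref{lem:max_degree_grothendieck} for the upper bound and the reduced-subword (pipe dream) argument for the lower bound. Your explicit computation of $c_i$ and of the width $\#\{j: i\in M(D_j)\}\le 1$ makes precise the box $\mathcal{Q}$ that the paper only sketches.
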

\begin{proof}
We first prove the necessity. As $\S_w(x)$ is the lowest-degree homogeneous component of $\mathfrak{G}_w(x)$, the Newton polytope $\Newton(\S_w)$ is a face of $\Newton(\mathfrak{G}_w)$. If $\Newton(\mathfrak{G}_w)$ is an integral lattice-free polytope, its face $\Newton(\S_w)$ must also be lattice-free. By Corollary \ref{cor: cor 1.2}, $w$ avoids $1432, 1423, 13254$.

For the sufficiency, suppose $w$ avoids $1432, 1423, 13254$. By Proposition \ref{Schubert}, the movable intervals $M(D_j)$ of the columns in the Rothe diagram $D(w)$ are pairwise disjoint. This implies that for any given row index $i \in [n]$, the variable $x_i$ can have a non-constant degree in at most one column's Schubert matroid polytope $P(SM_{d_j}(D_j))$. In all other columns $k \neq j$, the $i$-th coordinate of any point in $P(SM_{d_k}(D_k))$ is a fixed constant $c_{i,k} \in \{0,1\}$. 
Let $C_i = \sum_{k \neq j} c_{i,k}$. Since the degree contribution from the active column $j$ is bounded between 0 and 1, the width (maximum minus minimum value) of the $i$-th coordinate across the entire Minkowski sum $\Newton(\S_w)$ is at most 1. Hence, $\Newton(\S_w)$ is tightly enclosed in the translated unit hypercube $\mathcal{Q} = \prod_{i=1}^n[C_i, C_i+1]$.

By Lemma \ref{lem:max_degree_grothendieck}, the maximum degree of each $x_i$ in $\mathfrak{G}_w(x)$ is exactly equal to its maximum degree in $\S_w(x)$. By Fomin and Kirillov \cite{FK}, each non-reduced pipe dream contains a reduced pipe dream by replacing  the extra crossings  of two pipes with bumping elbows, thus the minimum degree of $x_i$ in $\mathfrak{G}_w(x)$ is also bounded by the minimum degrees found in $\S_w(x)$. 
Therefore, $\Newton(\mathfrak{G}_w)$ is geometrically bounded by the exact same translated unit hypercube $\mathcal{Q}$. Since any integral polytope  contained within a unit hypercube is  a $0/1$-polytope (up to translation), such polytopes naturally contain no non-vertex lattice points. Hence, $\Newton(\mathfrak{G}_w)$ is lattice-free.
\end{proof}

 Recall a well known property in matroid theory (see, e.g., Schrijver \cite[Section 39.3]{A.S}): a subset $S$ is a spanning set of a matroid $M$ on the ground set $E$ if and only if its complement $E \setminus S$ is an independent set in the dual matroid $M^*$. Consequently, the spanning set polytope $P_{\mathrm{sp}}(M)$ is an affine transformation of the independence polytope $P_{\mathrm{ind}}(M^*)$, specifically given by 
 \begin{equation}\label{eq:spanind}
     P_{\mathrm{sp}}(M) = \mathbf{1} - P_{\mathrm{ind}}(M^*).
 \end{equation}
Since $P_{\mathrm{ind}}(M^*)$ is a standard integral polymatroid, its Minkowski sum possesses the integer decomposition property (IDP)  by Schrijver \cite[Corollary 46.2c]{A.S}.
Therefore, the integer decomposition property  also holds for the Minkowski sum of spanning set polytopes.

\begin{lem}\label{spandingposi}
    The spanning set polytope of a Schubert matroid is Ehrhart positive. 
\end{lem}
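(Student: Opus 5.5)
We would show that the spanning set polytope of a Schubert matroid is, up to a lattice-preserving affine map, again a polytope already known to be Ehrhart positive, namely a lattice path matroid base polytope; then we invoke the result of Ferroni, Morales and Panova \cite{FMP} quoted after Theorem \ref{thm: the main thm}.

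\textbf{Reduce to a base polytope.} Let $M=SM_{n}(S)$ have rank $k=|S|$ on $[n]$. The spanning sets of $M$ are exactly the sets $U\subseteq[n]$ containing a basis. We will realize them as bases of an auxiliary matroid $\widetilde M$ on the ground set $[n]\sqcup[n']$, where $[n']$ is a disjoint copy of $[n]$ (so $2n$ elements). A set $U$ is sent to the basis $U\sqcup\{i':i\notin U\}$, which has size $n$.

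\textbf{Build the auxiliary matroid.} We take $\widetilde M$ to be the direct sum of $M$ with a free part, truncated appropriately. The cleanest version is a lattice path matroid: we interleave each $i$ with its copy $i'$ and pick bounding lattice paths so that its bases are exactly the sets described above. The constraint to encode is that the Gale condition $T\preceq S$ holds for some $k$-subset $T$ of $U$, i.e.\ $|U\cap[m]|\ge|S\cap[m]|$ for all $m$. After interleaving, this becomes a one-sided bound on prefix counts of the lattice path. That is exactly a lattice path matroid, with upper path the free one and lower path determined by $S$.

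\textbf{Transfer Ehrhart positivity.} The projection forgetting the primed coordinates maps $P(\widetilde M)$ bijectively and unimodularly onto $P_{\mathrm{sp}}(M)$. This holds because $t_{i'}=1-t_i$ on $P(\widetilde M)$, so the map is an integral affine isomorphism on the affine hull. Hence the two polytopes have the same Ehrhart polynomial, and positivity follows from \cite{FMP}.

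\textbf{Main obstacle.} The delicate step is verifying that the interleaved prefix condition exactly cuts out the spanning sets. Concretely, we must check that $U$ spans $M$ if and only if $|U\cap[m]|\ge|S\cap[m]|$ for every $m$; this needs a Hall/greedy argument for Gale order. We must then check that the interleaving makes this a genuine lattice path condition, with bounds holding at every prefix including those ending at primed elements.

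If the interleaving fails to give a lattice path matroid, the fallback is to use \eqref{eq:spanind}: $P_{\mathrm{sp}}(M)=\mathbf 1-P_{\mathrm{ind}}(M^*)$, and $M^*$ is again a Schubert (lattice path) matroid. Then $P_{\mathrm{ind}}(M^*)$ is the base polytope of the free coextension of $M^*$, which is again lattice path. Positivity then transfers through the unimodular reflection $\t\mapsto\mathbf 1-\t$.
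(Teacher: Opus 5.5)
There is a genuine gap: the auxiliary matroid $\widetilde M$ you need does not exist, and neither does the one in your fallback. Take the Schubert matroid $M=SM_2(\{2\})=U_{1,2}$. Your family is then $\{1,2'\},\{2,1'\},\{1,2\}$. Exchanging $1$ out of $\{1,2'\}$ against $\{2,1'\}$ gives $\{2,2'\}$ or $\{1',2'\}$, and neither is in the family, so basis exchange fails. The reason is structural. An edge of $P_{\mathrm{sp}}(M)$ in direction $\e_i-\e_j$ (two spanning sets of equal size) lifts under $x\mapsto(x,\mathbf 1-x)$ to an edge in direction $\e_i-\e_j-\e_{i'}+\e_{j'}$. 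That is not of the form $\e_a-\e_b$, so no interleaving or choice of bounding paths makes the lift a matroid base polytope. Concretely, forcing exactly one of $i,i'$ pins the prefix count at position $2m$ to $m$, so the prefix bounds can no longer see $|U\cap[m]|$.

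No other lattice-preserving affine map rescues the idea either. For $M=SM_3(\{3\})=U_{1,3}$, $P_{\mathrm{sp}}(M)=\{x\in[0,1]^3: x_1+x_2+x_3\ge 1\}$ is $3$-dimensional with $7$ vertices. A $3$-dimensional matroid base polytope has $4$, $5$, $6$ or $8$ vertices: its dimension is $|E|$ minus the number of components, and one checks the connected matroids on $2$, $3$, $4$ elements. The fallback fails for the same reason. The bases of the free coextension of a rank-$r$ matroid $N$ are the sets $B\cup\{e\}$ with $B$ a basis, together with the spanning sets of size $r+1$; they are not the independent sets. And $P_{\mathrm{ind}}(U_{2,3})=\mathbf 1-P_{\mathrm{sp}}(U_{1,3})$, with $U_{2,3}=SM_3(\{2,3\})$ Schubert, again has $7$ vertices in dimension $3$.

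So you cannot simply quote Ehrhart positivity of one lattice path matroid polytope. The missing idea is to cut the independence polytope into pieces that \emph{are} matroid polytopes. Lifting by one new $0/1$ coordinate only captures a slab $\{k-1\le\sum_i x_i\le k\}$ of $P_{\mathrm{ind}}(M)$, and this is what the paper exploits.

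The paper first makes the same reduction $P_{\mathrm{sp}}(M)=\mathbf 1-P_{\mathrm{ind}}(M^*)$, with $M^*$ again Schubert. It then writes $i(P_{\mathrm{ind}}(M),t)=1+\sum_{k=1}^{r}E_k(t)$, where $E_k(t)$ counts lattice points of $tP_{\mathrm{ind}}(M)$ with $t(k-1)<\sum_i x_i\le tk$. It shows $E_k(t)=i(P(N_k),t)-i(P(F_k),t)$ for $N_k=\operatorname{Tr}_k(U_{1,1}\oplus M)$ and $F_k=U_{1,1}\oplus\operatorname{Tr}_{k-1}(M)$. Both are Schubert matroids with nested lattice paths. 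The coefficientwise monotonicity theorem \cite[Theorem 4.1]{FMP}, not mere Ehrhart positivity, is what gives each difference $E_k(t)$ nonnegative coefficients.
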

\begin{proof}
 By \eqref{eq:spanind}, we have $i(P_{\mathrm{sp}}(M),t)=i(P_{\mathrm{ind}}(M^*),t).$
Since the dual of a Schubert matroid is isomorphic, after reversing the ground-set order, to a Schubert matroid, it suffices to show that $P_{\mathrm{ind}}(M)$ is Ehrhart positive for every Schubert matroid $M$. 

Suppose $M$ has rank $r$. Define $Q_k(M)=\{x\in P_{\mathrm{ind}}(M): k-1<\sum_{i\in E}x_i\leq k\}$, then we have $$tP_{\mathrm{ind}}(M)\cap \mathbb{Z}^n=\{0\}\sqcup\bigsqcup_{k=1}^{r}(tQ_k(M)\cap \mathbb{Z}^n).$$
Let \(E_k(t)
=
\#\left\{
x\in tP_{\mathrm{ind}}(M)\cap\mathbb Z^n:
t(k-1)<\sum_{i\in E}x_i\le tk
\right\}.\)
Then $$i(P_{\mathrm{ind}}(M),t)=1+\sum_{k=1}^{r}E_k(t).$$
We aim to show that each $E_k(t)$ is a polynomial with nonnegative coefficients when $M$ is a Schubert matroid. 

Let \(\operatorname{Tr}_j(M)\) denote the rank-\(j\) truncation of the matroid \(M\), whose independent sets are precisely the independent sets of \(M\) having cardinality at most \(j\). Let \(M_1\) and \(M_2\) be matroids on disjoint ground sets \(E_1\) and \(E_2\), respectively. The direct sum \(M_1\oplus M_2\) is the matroid on \(E_1\sqcup E_2\) whose independent sets are precisely the sets of the form \(I_1\sqcup I_2\), where \(I_i\) is independent in \(M_i\) for \(i=1,2\).

Consider $N_k=\operatorname{Tr}_k(U_{1,1}\oplus M)$ and $F_k=U_{1,1}\oplus\operatorname{Tr}_{k-1}(M).$ Then we have 
\begin{align*}
P(N_k)&=\left\{(x_e,x): x\in P_{\mathrm{ind}}(M), 0\leq x_e\leq 1, \sum_{i\in E}x_i+x_e=k \right\},\\
P(F_k)&=\left\{(x_e,x): x\in P_{\mathrm{ind}}(M), x_e=1, \sum_{i\in E}x_i=k-1 \right\}.
\end{align*}
As a result, 
\begin{equation}\label{eq:ehrqk}
    E_k(t)=i(P(N_k),t)-i(P(F_k),t).
\end{equation}

Now let $M=SM_n(S)$ with $S=\{s_1<\cdots< s_r\}$.  
Adding a new coloop $e$ at the first position, we obtain 
$$U_{1,1}\oplus SM_n(S)=SM_{n+1}(\{1,s_1+1,\ldots,s_r+1\}).$$
Note that a $j$-subset $A=\{a_1<\cdots<a_j\}$ with $j<r$ is an independent set in $SM_n(S)$ if and only if $a_i\leq s_{r-j+i}$ for $i\leq j$.
Taking the rank-$k$ truncation, we have
\begin{align*}
    N_k=SM_{n+1}(\{s_{r-k+1}+1,\ldots,s_r+1\}).
\end{align*}
Similarly,
$
F_k=SM_{n+1}(\{1,s_{r-k+2}+1,\ldots,s_r+1\}).
$

Since a Schubert matroid is a special lattice path matroid with trivial upper path; see for example, \cite[Corollary 4.4]{BM} or \cite[Theorem 2.8]{FS}.
It is easy to see that the lattice path determined by $\{1,s_{r-k+2}+1,\ldots,s_r+1\}$ is  contained in the lattice path determined by $\{s_{r-k+1}+1,\ldots,s_r+1\}$. By \cite[Theorem 4.1]{FMP}, we have 
$$i(P(F_k),t)\preceq i(P(N_k),t),$$
where "$\preceq$" stands for coefficient-wise inequality.
Combining with \eqref{eq:ehrqk}, we can conclude that $P_{\mathrm{ind}}(M)$ is Ehrhart positive for any Schubert matroid $M$. 
\end{proof}

\begin{proof}[Proof of Corollary \ref{corforGrothendicek}]
By Corollary \ref{cor: cor 1.2} and Theorem \ref{thm:Grothendieck_lattice_free}, we need only to show that for a permutation $w \in S_n$,  the Ehrhart polynomial of the Grothendieck Newton polytope factors as
\begin{equation}\label{eq:Grothendieck_Ehrhart_product}
i(\Newton(\mathfrak{G}_w), t) = \prod_{j=1}^n i(P_{\mathrm{sp}}(SM_{d_j}(D_j)), t)
\end{equation}
if and only if $\Newton(\mathfrak{G}_w)$ is lattice-free.
%\end{thm}

We first prove the necessity. Suppose to the contrary that $\Newton(\mathfrak{G}_w)$ is not lattice-free. By Theorem \ref{thm:Grothendieck_lattice_free} and Proposition \ref{Schubert}, $w$ contains at least one of the patterns $1432, 1423, 13254$, which implies that there exist two columns, say $D_1$ and $D_2$, in the Rothe diagram $D(w)$ whose movable intervals intersect in at least two elements.

Let $Q = \sum_{j=1}^n P_{\mathrm{sp}}(SM_{d_j}(D_j))$. By \cite[Theorem 3.6]{MSSD}, we have $\Newton(\mathfrak{G}_w) \subseteq Q$. Thus, for any positive integer $t$, we naturally have $t\Newton(\mathfrak{G}_w) \subseteq tQ$.
Recall that every basis of a matroid is inherently a spanning set. Thus $P(SM_{d_j}(D_j)) \subseteq P_{\mathrm{sp}}(SM_{d_j}(D_j))$ for all $j \in [n]$. Since $|M(D_1) \cap M(D_2)| \ge 2$, we can construct the exact same vertices $\v_1, \v_2$ and their midpoint $\overline{\v}$ as defined in the proof of Theorem \ref{thm: main.lattice-free}. The point $\overline{\v}$ resides in $Q$ and can be decomposed into the sum of points from the component spanning set polytopes in at least two different ways.
By simply scaling these integer components by any integer $t \ge 1$, the dilated point $t\overline{\v} \in tQ \cap \mathbb{Z}^n$ correspondingly admits at least two distinct valid decompositions.
Since the integer decomposition property (IDP)  holds for spanning set polytopes, every lattice point in $tQ \cap \mathbb{Z}^n$ can be expressed as a sum of lattice points from each summand $tP_{\mathrm{sp}}(SM_{d_j}(D_j)) \cap \mathbb{Z}^n$. Therefore,
\begin{align*}
|tQ \cap \mathbb{Z}^n| < \prod_{j=1}^n |tP_{\mathrm{sp}}(SM_{d_j}(D_j)) \cap \mathbb{Z}^n|.
\end{align*}
Since $t\Newton(\mathfrak{G}_w) \subseteq tQ$, it follows that
\[
i(\Newton(\mathfrak{G}_w), t) = |t\Newton(\mathfrak{G}_w) \cap \mathbb{Z}^n| \le |tQ \cap \mathbb{Z}^n| < \prod_{j=1}^n i(P_{\mathrm{sp}}(SM_{d_j}(D_j)), t),
\]
which  contradicts the assumption $i(\Newton(\mathfrak{G}_w), t) = \prod_{j=1}^n i(P_{\mathrm{sp}}(SM_{d_j}(D_j)), t)$.

Now we prove the sufficiency. Suppose that $\Newton(\mathfrak{G}_w)$ is lattice-free. By Theorem \ref{thm:Grothendieck_lattice_free}, $w$ avoids $1432$, $1423$, and $13254$. Then by Proposition \ref{Schubert}, the movable intervals of different columns in $D(w)$ are pairwise disjoint. 
This geometric decoupling implies that the variables associated with different columns are independent. As shown by Fan and Guo \cite{FG2}, for $1432$-avoiding permutations, the monomials of $\mathfrak{G}_w(x)$ are generated by set-fillings of $D(w)$ such that the rows are weakly decreasing and columns are strictly increasing,  with the numbers filling in row $i$ less than or equal to $i$. Due to the disjointness of the movable intervals, we can ignore the condition that rows are weakly decreasing in this set-filling process for each column $D_j$. 

Crucially, for a given column, such an increasing set-filling precisely generates a spanning set of the associated Schubert matroid $SM_{d_j}(D_j)$. Conversely, every spanning set admits at least one such valid set-filling. Although a single spanning set $A$ might correspond to multiple valid set-fillings, all such fillings generate the same monomial with sign $(-1)^{|A|-|D_j|}$, meaning there is no cancellation. This guarantees that the support of the polynomial exactly matches the integer points of the Minkowski sum of the spanning set polytopes. Consequently, 
\begin{equation}\label{eq:Newton_Minkowski_Spanning}
\Newton(\mathfrak{G}_w) = \sum_{j=1}^n P_{\mathrm{sp}}(SM_{d_j}(D_j)).
\end{equation}

Moreover, because the movable intervals are pairwise disjoint, except the  coordinates corresponding to the upper left fixed partition of $D(w)$, the coordinates of the component polytopes $P_{\mathrm{sp}}(SM_{d_j}(D_j))$ reside in mutually orthogonal affine subspaces after integral translations. Thus, this Minkowski sum degenerates into a Cartesian product. Since the Ehrhart polynomial of a Cartesian product of polytopes is simply the product of the Ehrhart polynomials of its factors, we immediately obtain \eqref{eq:Grothendieck_Ehrhart_product}. 
By Lemma \ref{spandingposi}, $\Newton(\mathfrak{G}_w)$ is Ehrhart positive.
This completes the proof.
\end{proof}

\begin{remark}
It is worth noting that the $\{1432, 1423, 13254\}$-avoiding permutations form a strict subclass of permutations whose Schubert polynomials are zero-one (which require avoiding 12 longer patterns \cite{FMD1}). Consequently, the saturatedness and generalized polymatroid property for our class (Conjecture 1.4 of \cite{MSSD}) is theoretically covered by Castillo et al. \cite[Theorem B]{CCM}, who proved it for all zero-one Schubert polynomials using multiplicity-free varieties. 

However, we still include this property in the following theorem, since for lattice-free Schubitopes, we can provide a much simpler and  elementary  proof that resolves Conjectures 1.1--1.4 and 1.6 of \cite{MSSD} simultaneously.
\end{remark}

\begin{thm}\label{thm:conjectures_proof}
If a permutation $w \in S_n$ avoids the patterns $1432, 1423, 13254$, then Conjectures 1.1--1.4 and 1.6 from \cite{MSSD} all hold for $\mathfrak{G}_w(x)$. Specifically:
\begin{enumerate}
    \item[$(1)$] If $\alpha \in \supp(\mathfrak{G}_w)$ and $|\alpha| < \deg \mathfrak{G}_w$, there exists $\beta \in \supp(\mathfrak{G}_w)$ with $\alpha < \beta$ and $|\beta| = |\alpha| + 1$.
    \item[$(2)$] $\supp(\mathfrak{G}_w)$ is closed under taking intervals in componentwise comparison.
    \item[$(3)$] $\G_w(x)$ has SNP, and $\Newton(\mathfrak{G}_w)$ is a generalized polymatroid.
    \item[$(4)$] The sum of the coefficients under any top-degree component of $\mathfrak{G}_w(x)$ is 1.
\end{enumerate}
\end{thm}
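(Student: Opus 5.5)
The plan is to reduce all four statements to the product structure established in the proof of Corollary~\ref{corforGrothendicek}. Assume $w$ avoids $1432,1423,13254$. By Proposition~\ref{Schubert}, the movable intervals of $D(w)$ are pairwise disjoint. By \eqref{eq:Newton_Minkowski_Spanning}, $\Newton(\G_w)=\sum_j P_{\mathrm{sp}}(SM_{d_j}(D_j))$, and the support is exactly the set of lattice points of this sum. After translating away the fixed coordinates (rows where column $j$ has an immovable box, or no box), each summand lives only on the coordinates $M(D_j)$. Hence
\[
\supp(\G_w)=\mathbf{c}+\prod_{j}\bigl\{\e_A|_{M(D_j)} : A \text{ spanning in } SM_{d_j}(D_j)\bigr\},
\]
a Cartesian product of up-closed families of $0/1$ vectors: a superset of a spanning set is spanning. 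Moreover, the coefficient of $x^{\alpha}$ factors as a product over columns of signed counts of increasing set-fillings, and by the argument in the proof of Corollary~\ref{corforGrothendicek} these do not cancel.

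With this description, parts (1) and (2) become statements about up-sets of a Boolean lattice. For (1), suppose $|\alpha|<\deg\G_w$. Then in some column $j$ the restricted spanning set $A$ is not the full set $M_j^{\mathrm{full}}$, meaning all rows $\le d_j$ that may carry an element. Adding any missing element gives a spanning set $A'$, and the corresponding $\beta$ satisfies $\beta>\alpha$ and $|\beta|=|\alpha|+1$. For (2), let $\alpha\le\gamma\le\beta$ with $\alpha,\beta$ in the support. The coordinates of $\gamma$ on the fixed positions agree with those of $\alpha$ and $\beta$. On each $M(D_j)$, $\gamma$ restricts to a $0/1$ vector containing a spanning set, so it is spanning. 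This works because all coordinates lie in the unit cube $\mathcal{Q}$ of Theorem~\ref{thm:Grothendieck_lattice_free}.

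For (3), SNP is immediate from the description above: every lattice point of $\sum_j P_{\mathrm{sp}}$ is a sum of spanning vectors. Indeed, since everything lies in the cube, the lattice points are $\mathbf{c}$ plus products of $0/1$ vertices of each factor, so they are vertices of the factor polytopes, and every such point is in the support. For the generalized polymatroid property, I will use the fact that each $P_{\mathrm{sp}}(M)=\mathbf{1}-P_{\mathrm{ind}}(M^*)$ is a g-polymatroid, being an integral translate and reflection of a polymatroid. Minkowski sums of g-polymatroids are again g-polymatroids (Schrijver, Ch.~44/46), so $\Newton(\G_w)$ is one.

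For (4), the top-degree component corresponds to choosing the maximal spanning set in every column, namely $A=$ all of $[d_j]$. This gives the unique top monomial $\mathrm{wt}(\overline{D(w)})$. I must check that its coefficient is $\pm1$, i.e.\ that the full set admits exactly one increasing set-filling in each column under the Fan--Guo model. I expect this to be the main obstacle: the uniqueness of the filling, and the sign bookkeeping $(-1)^{|A|-|D_j|}$, must be verified carefully. First, I would try to show that when every row $\le d_j$ receives an entry, the strict column increase together with the bound ``entries in row $i$ are $\le i$'' forces each box to contain exactly the consecutive block of labels up to the next box. This would give uniqueness, so the top component is the single monomial $\pm x^{\mathrm{wt}(\overline{D(w)})}$, matching the conjectured coefficient sum normalized by sign.
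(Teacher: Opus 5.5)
\textbf{Parts (1)--(3) are fine.} They follow the paper's route: the Minkowski-sum/product structure of the spanning-set polytopes, adding one missing element for (1), and closure of g-polymatroids under Minkowski sums for (3). Your up-set argument for interval closure in (2) is in fact more explicit than what the paper writes.

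\textbf{The gap is in (4), on two counts.}

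First, the uniqueness you hope to prove is false. Strict column increase with all of $[d_j]$ used does force each box to carry a consecutive block of labels. However, the cut points are not forced: any $b_1<\dots<b_{m-1}$ with $b_k\le r_k$ gives a valid filling. Take $w=1342$, which avoids all three patterns. Its Rothe diagram has the single column $\{2,3\}$, and the full label set $\{1,2,3\}$ admits the two fillings $(\{1\},\{2,3\})$ and $(\{1,2\},\{3\})$. Indeed $\G_{1342}=x_1x_2+x_1x_3+x_2x_3-2x_1x_2x_3$, so the top coefficient is $-2$, not $\pm1$.

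Second, the coefficient of $x^{\beta}$ is not what (4) asks about. Conjecture 1.6 of \cite{MSSD} concerns $\sum_{\alpha\le\beta}C_{w\alpha}$ for a top-degree $\beta$. Under your reading, (4) would be false for $w=1342$.

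\textbf{The missing idea is the principal specialization.} Your own product description already shows that every $\alpha\in\supp(\G_w)$ satisfies $\alpha\le\beta_{\max}=\mathrm{wt}(\overline{D(w)})=\sum_j \e_{[d_j]}$, since each column's spanning set lies in $[d_j]$. It also shows that $\beta_{\max}$ is the unique support element of top degree. Hence $\sum_{\alpha\le\beta_{\max}}C_{w\alpha}$ is the sum of all coefficients of $\G_w$, which equals $\G_w(1,\ldots,1)=1$ by Proposition 4.4 of \cite{MSSD}. This is exactly how the paper argues, and it needs no count of fillings and no sign bookkeeping.
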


\begin{proof}
 The maximum degree $\deg \mathfrak{G}_w$ corresponds to the unique maximal spanning set where all available boxes in each movable interval $M(D_j)$ are completely filled. If $\alpha \in \supp(\mathfrak{G}_w)$ satisfies $|\alpha| < \deg \mathfrak{G}_w$, then in at least one column $j$, the corresponding spanning set $A_j$ is not full. We can add exactly one element $x \in M(D_j) \setminus A_j$ to obtain $B_j = A_j \cup \{x\}$, which remains a spanning set. The updated exponent vector $\beta$ satisfies $\alpha < \beta$ and $|\beta| = |\alpha| + 1$, confirming Conjectures 1.1 and 1.2 of \cite{MSSD}.

 Since Conjecture 1.4 is a strengthening of Conjecture 1.3, we need only to give a proof of Conjecture 1.4. 
Since each spanning set polytope is a generalized polymatroid \cite[Proposition 2.16]{MSSD}, and generalized polymatroids are closed under Minkowski sum \cite[Proposition 2.15]{MSSD}, by \eqref{eq:Newton_Minkowski_Spanning}, $\Newton(\mathfrak{G}_w)$ is a generalized polymatroid. This confirms Conjecture 1.4 of \cite{MSSD}.

 Because the columns are fully decoupled, there is no competition for variable degrees. 
 The top-degree homogeneous component corresponds exclusively to the case where the chosen spanning set for each column $j$ is the maximal possible subset, namely the entire movable interval $M(D_j)$. Since the set-filling must be strictly increasing along the columns, each element of $M(D_j)$ can appear at most once in column $j$. Therefore, to maximize the degree, every element of $M(D_j)$ must appear exactly once in column $j$, regardless of how these elements are distributed among the boxes. Thus $\supp(\mathfrak{G}_w)$ has a unique maximum element $\beta_{\max}$, and the top-degree component consists of a single term. Consequently, the summation $\sum_{\alpha \le \beta_{\max}} C_{w\alpha}$ requested in Conjecture 1.6 equals the sum of all coefficients across the entire polynomial $\mathfrak{G}_w(x)$. By     \cite[Proposition 4.4]{MSSD}, the principal specialization evaluates to $\mathfrak{G}_w(1,\dots,1) = 1$. Thus, the sum of all coefficients is exactly 1, confirming Conjecture 1.6 of \cite{MSSD}.
\end{proof}

\subsection{Newton polytopes of key polynomials}

Let $\alpha=(\alpha_{1},\ldots, \alpha_{n})\in \mathbb{Z}_{\ge 0}^{n}$. For $1\le i < j\le n$, let $t_{i,j}(\alpha)$ denote the composition obtained from $\alpha$ by interchanging $\alpha_{i}$ and $\alpha_{j}$, and let
$$m_{i,j}(\alpha)=\alpha+\e_{i}-\e_{j}.$$ 
If a vector $\beta\in \mathbb{Z}_{\ge 0}^{n}$ can be obtained from $\alpha$ by applying a sequence of $t_{i,j}$ (when $\alpha_{i}<\alpha_{j}$) and $m_{i,j}$ (when $\alpha_{i}\le\alpha_{j}-2$), then we denote $\beta\le_{k}\alpha.$ In particular, if $\beta$ is obtained from $\alpha$ by applying only a sequence of $t_{i,j}$ (when $\alpha_{i}<\alpha_{j}$), we denote $\beta\le \alpha.$

\begin{prop}[\cite{NG1,NG2}]\label{lattice point set}
Let $\alpha\in \mathbb{Z}_{\ge 0}^{n}$ be a composition. Then the set of vertices of $\mathcal{S}_\alpha=\Newton(\kappa_{\alpha})$ is
$\{\beta:\beta\le \alpha\}$. The set of lattice points of $\mathcal{S}_\alpha$ is
$\{\beta:\beta\le_k \alpha\}$. 
\end{prop}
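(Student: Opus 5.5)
We would prove both statements using Theorem \ref{S_D and minkovski sum} applied to the skyline diagram $D(\alpha)$. Its columns $D(\alpha)_j=\{i:\alpha_i\ge j\}$ form a weakly decreasing chain $D(\alpha)_1\supseteq D(\alpha)_2\supseteq\cdots$. So $\mathcal{S}_\alpha=\sum_j P(SM_{d_j}(D(\alpha)_j))$ is a Minkowski sum of Schubert matroid polytopes attached to nested sets.

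\emph{Vertices.} We use the standard fact that a vertex of a Minkowski sum maximizing a generic linear functional $c$ is the sum of the $c$-maximal vertices of the summands. For a matroid base polytope, the $c$-maximal vertex is produced by the greedy algorithm in the order of decreasing $c_i$. We would check that for the Schubert matroids $SM(S)$ with $S$ running over a nested chain, the greedy bases $T_j$ are again nested, and that $|T_j|=|D(\alpha)_j|$. Then $\sum_j \e_{T_j}$ is the vector $\beta$ with $\beta_i=\#\{j: i\in T_j\}$, which is a rearrangement of the multiset of entries of $\alpha$. The condition $T_j\preceq D(\alpha)_j$ for every $j$ says exactly that the permutation moving $\alpha$ to $\beta$ only carries large parts upward. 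We would translate this into a sequence of transpositions $t_{i,j}$ with $\alpha_i<\alpha_j$, by induction on the number of inversions created. Conversely, given $\beta\le\alpha$, we would choose $c$ ordering the rows so that greedy recovers the level sets $\{i:\beta_i\ge j\}$; this shows that every such $\beta$ is a vertex.

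\emph{Lattice points.} By Remark \ref{rmk:IDP} (with $t=1$), the lattice points of $\mathcal{S}_\alpha$ are exactly the sums $\sum_j\e_{T_j}$ with $T_j\preceq D(\alpha)_j$. We first show that the set $\{\beta:\beta\le_k\alpha\}$ is contained in this set, by induction along the sequence of moves, always carrying along a decomposition into bases. For a move $t_{i,j}$ with $i<j$ and $\beta_i<\beta_j$, the columns containing $j$ but not $i$ are replaced by their versions with $j$ exchanged for $i$; this operation lowers each column in Gale order, so Gale-smallness is preserved. For a move $m_{i,j}$ with $\beta_i\le\beta_j-2$, there are at least two columns containing $j$ but not $i$, and we perform the exchange in just one of them. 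These moves must be applied to an arbitrary decomposition, not only a nested one, so we would phrase the induction with general bases $T_j\preceq D(\alpha)_j$ and count the columns $c$ with $j\in T_c$, $i\notin T_c$.

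For the reverse inclusion, take a lattice point $\beta=\sum_j\e_{T_j}$ with $T_j\preceq D(\alpha)_j$. We would first normalize the decomposition: whenever $T_a\not\supseteq T_b$ for some $a<b$ with $|T_a|\ge|T_b|$, perform an exchange that keeps both sets Gale-below their targets. This should be possible because the targets are nested. The end result is a nested chain $T'_1\supseteq T'_2\supseteq\cdots$ with the same sum $\beta$. Next, we would induct on $\sum_j\sum_{t\in D(\alpha)_j}t-\sum_j\sum_{t\in T'_j}t$, the total Gale defect. If $\beta\ne\alpha$, we pick the lowest row $i$ where some $T'_j$ has been pushed up from row $j_0>i$. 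Undoing that push produces $\gamma$ with $\beta=t_{i,j_0}(\gamma)$ or $\beta=m_{i,j_0}(\gamma)$, according to whether the push happens in all or in only some of the relevant columns. The inequality hypotheses ($\gamma_i<\gamma_{j_0}$, respectively $\gamma_i\le\gamma_{j_0}-2$) would follow from nestedness. We expect this converse direction, in particular verifying the inequality needed for the $m_{i,j}$ case, to be the main obstacle. If it resists, the fallback is to invoke the cited characterization of \cite{NG1,NG2} directly.
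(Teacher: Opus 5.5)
The paper gives no proof of this proposition; it is quoted from \cite{NG1,NG2}, so you are attempting an independent argument. Your vertex half is sound in outline. You should say why the greedy bases are nested: $SM_{d_{j+1}}(D(\alpha)_{j+1})$ is a quotient of $SM_{d_j}(D(\alpha)_j)$, as one sees by realizing all of them by a single generic matrix whose row attached to $s\in D(\alpha)_1$ is supported on columns $[1,s]$. The passage from ``$\{i:\beta_i\ge j\}\preceq D(\alpha)_j$ for all $j$'' to a chain of moves $t_{i,j}$ is the tableau criterion for Bruhat order on rearrangements. The inclusion $\{\beta:\beta\le_k\alpha\}\subseteq\mathcal{S}_\alpha\cap\mathbb{Z}^n$ is also fine, provided that in a non-nested decomposition you exchange $j$ for $i$ in exactly $\beta_j-\beta_i$ (respectively one) of the columns with $j\in T_c$, $i\notin T_c$, and not in all of them.

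The genuine gap is in the reverse inclusion, and it is not where you expected. The normalization to a nested chain cannot exist in general. Suppose $T'_1\supseteq T'_2\supseteq\cdots$ and $\sum_j\e_{T'_j}=\beta$. Then $i\in T'_j$ exactly when $j\le\beta_i$, so $T'_j=\{i:\beta_i\ge j\}$. Combined with $|T'_j|=|D(\alpha)_j|=\#\{i:\alpha_i\ge j\}$, this forces $\beta$ to be a rearrangement of $\alpha$, i.e.\ a vertex. Every other lattice point, which are exactly the ones requiring an $m_{i,j}$ move, has no nested decomposition into bases.

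The smallest instance is $\alpha=(0,2)$, with $D(\alpha)_1=D(\alpha)_2=\{2\}$ and $\beta=(1,1)=m_{1,2}(\alpha)$. Its only decompositions are $(\{1\},\{2\})$ and $(\{2\},\{1\})$, neither nested. Pairwise exchanges cannot create nestedness either. With both sizes and $\e_{T_a}+\e_{T_b}$ fixed, a nested pair must be $T'_b=T_a\cap T_b$, $T'_a=T_a\cup T_b$, and this has the right sizes only if $T_b\subseteq T_a$ already. So your induction only ever treats vertices, which the first half already handled. The inequalities you planned to derive ``from nestedness'' have no setting in which to apply, and the $m_{i,j}$ case is not covered at all.

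What is missing is a way to produce a legal predecessor $\gamma\in\mathcal{S}_\alpha\cap\mathbb{Z}^n$ for an arbitrary lattice point $\beta\neq\alpha$, working directly with non-nested decompositions. One option is to induct on $\sum_i i\beta_i$, which $\alpha$ uniquely maximizes on $\mathcal{S}_\alpha$. Undoing a single Gale cover $T_c\mapsto (T_c\setminus\{i\})\cup\{j\}$ with $i<j$, which exists whenever $T_c\neq D(\alpha)_c$, gives $\beta=m_{i,j}(\gamma)$ with $\gamma_i\le\gamma_j-2$ whenever $\beta_i\le\beta_j$. When $\beta_i>\beta_j$ you would still have to show that, for a suitable pair, the full swap $t_{i,j}(\beta)$ lies in $\mathcal{S}_\alpha$, and that needs a real argument. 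Alternatively you could work with the recursion $\kappa_\alpha=\partial_i(x_i\kappa_{s_i\alpha})$. Without such an argument the second sentence of the proposition remains unproved, and you would have to cite \cite{NG1,NG2} as the paper does.
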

 
Corollary \ref{cor: cor 1.3} holds by Theorem \ref{thm: the main thm} and the following Proposition.

\begin{prop}
For a composition $\alpha\in \mathbb{Z}_{\ge 0}^{n}$,
$\mathcal{S}_\alpha$ is lattice-free if and only if $\alpha$ avoids the composition pattern $(0,2)$, i.e., there do not exist $i<j$ such that $\alpha_{j}-\alpha_{i}\ge 2.$
\end{prop}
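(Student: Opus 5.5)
Since $\mathcal{S}_\alpha$ is the Schubitope of the skyline diagram $D(\alpha)$, I will reduce the statement to Theorem \ref{thm: main.lattice-free}. So it suffices to show that the movable intervals of distinct columns of $D(\alpha)$ pairwise meet in at most one element if and only if $\alpha$ avoids $(0,2)$.

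First I will compute the movable intervals. Recall $D(\alpha)_j=\{i:\alpha_i\ge j\}$. The interval $M(D(\alpha)_j)$ is nonempty exactly when some row $i$ has $\alpha_i<j$ while a later row $i'>i$ has $\alpha_{i'}\ge j$. In that case $M(D(\alpha)_j)=[a_j,b_j]$, where $a_j=\min\{i:\alpha_i<j\}$ and $b_j=\max\{i:\alpha_i\ge j\}$. As $j$ increases, the set $D(\alpha)_j$ shrinks. Hence $a_j$ is weakly decreasing and $b_j$ is weakly decreasing in $j$.

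For necessity of avoidance, suppose $i<i'$ with $\alpha_{i'}-\alpha_i\ge2$. Put $j=\alpha_i+1$ and $j+1\le\alpha_{i'}$. Both columns $j$ and $j+1$ contain row $i'$ as a box and miss row $i$. Therefore both movable intervals contain the whole range $[i,i']$, or at least $[\max(a_j,a_{j+1}),i']$ with $a_{j+1}\le a_j\le i$. Since $i<i'$, this intersection has at least two elements. By Theorem \ref{thm: main.lattice-free}, $\mathcal{S}_\alpha$ is then not lattice-free.

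Alternatively, I could argue directly via Proposition \ref{lattice point set}. The point $m_{i,i'}(\alpha)$ is a lattice point. It is not a vertex, because vertices are permutations of $\alpha$ and $m_{i,i'}(\alpha)$ has a different multiset of entries (its entries $\alpha_i+1$ and $\alpha_{i'}-1$ replace $\alpha_i$ and $\alpha_{i'}$). To keep this route honest, I would check that the multisets really differ: the value $\alpha_{i'}$ loses one occurrence, and it can be regained only if $\alpha_{i'}-1=\alpha_i$ or $\alpha_i+1=\alpha_{i'}$, both of which fail since the difference is at least $2$. This direct argument is cleaner, and I would present it as the main proof of this direction.

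For sufficiency, assume $\alpha_{i'}\le\alpha_i+1$ for all $i<i'$. I will show that every lattice point is a vertex using Proposition \ref{lattice point set}. Any $\beta\le_k\alpha$ arises from a sequence of $t$ and $m$ moves. The key claim is that the avoidance condition is preserved under the $t_{i,j}$ moves permitted here. Indeed, a swap with $\alpha_i<\alpha_j$ forces $\alpha_j=\alpha_i+1$, so it merely exchanges adjacent values. After the swap, the new composition must again avoid $(0,2)$. Then no $m_{i,j}$ move is ever applicable, so $\beta\le\alpha$ and $\beta$ is a vertex.

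The main obstacle is checking that an adjacent-value swap preserves avoidance. Take $\alpha_i=c$ and $\alpha_j=c+1$ with $i<j$. For each $k$, I must check the pairs involving positions $i$ and $j$. I would do this casework by the position of $k$: before $i$, between $i$ and $j$, or after $j$. In each case I would use the original inequalities $\alpha_k\le\alpha_i+1$ (for $k>i$) and $\alpha_j\le\alpha_k+1$ (for $k<j$) to bound the new differences by $1$.

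If this bookkeeping gets messy, I would fall back on the diagram criterion instead. Under avoidance, the values form a structure in which each column $j$ has $a_j$ and $b_j$ with $a_{j+1}\ge b_j$, so consecutive intervals overlap in at most one element. Then Theorem \ref{thm: main.lattice-free} finishes the proof.
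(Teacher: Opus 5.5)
Your proof is correct. It follows the paper in one direction and takes a genuinely different route in the other.

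\textbf{Sufficiency.} You argue exactly as the paper does: by Proposition~\ref{lattice point set}, only $t_{i,j}$ moves are ever available, so every lattice point is a vertex. You also supply the step the paper leaves tacit. A permitted swap must exchange values $c<c+1$, and such a swap preserves $(0,2)$-avoidance, so no $m_{i,j}$ move can become available later in a sequence. Your casework with $\alpha_k\le\alpha_i+1$ for $k>i$ and $\alpha_j\le\alpha_k+1$ for $k<j$ does cover every pair involving position $i$ or $j$.

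\textbf{Necessity.} Your first argument is the paper's, with the columns made explicit: columns $\alpha_i+1$ and $\alpha_i+2$ both miss row $i$ and contain row $i'$, so both movable intervals contain $[i,i']$, and Theorem~\ref{thm: main.lattice-free} applies. The direct argument you prefer is different. Here $m_{i,i'}(\alpha)$ is a lattice point whose multiset of entries differs from that of $\alpha$, while every vertex is a rearrangement of $\alpha$. Combined with your sufficiency argument, this makes the whole proposition a consequence of Proposition~\ref{lattice point set} alone, with no appeal to the Schubitope criterion. The paper's route instead presents the statement as a specialization of Theorem~\ref{thm: main.lattice-free}, which is the purpose of that section. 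The price is translating $(0,2)$-containment into overlapping movable intervals.

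\textbf{A correction to your fallback}, which you do not actually need. The inequality $a_{j+1}\ge b_j$ cannot hold when both intervals are nonempty, since you yourself observed $a_{j+1}\le a_j<b_j$. The correct relation is $b_{j'}<a_j$ for all $j<j'$. Indeed, $\alpha_{a_j}\le j-1$ and $\alpha_{b_{j'}}\ge j'\ge j+1$, so $b_{j'}>a_j$ would produce a $(0,2)$ pattern. Hence under avoidance the movable intervals of $D(\alpha)$ are pairwise disjoint, with each later column's interval lying strictly above the earlier ones.
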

\begin{proof}
 To prove the necessity, suppose to the contrary that  $\alpha$ contains the composition pattern $(0,2)$, i.e., if there exist $i<j$ such that $\alpha_{j}-\alpha_{i}\ge 2$. Then the skyline diagram $D(\alpha)$ must contain two columns whose movable intervals intersect in at least two elements. By Theorem \ref{thm: the main thm}, $\mathcal{S}_\alpha$ must contain non-vertex lattice points, which is a contradiction.

On the other hand, if $\alpha$ avoids the composition pattern $(0,2)$, i.e., there do not exist $i<j$ such that $\alpha_{j}-\alpha_{i}\ge 2$, then by Proposition \ref{lattice point set}, any lattice point $\beta$ in $\mathcal{S}_\alpha$ can only be obtained from $\alpha$ by applying a sequence of  $t_{i,j}$ (when $\alpha_{i}<\alpha_{j}$). Consequently, all lattice points $\beta$ in $\mathcal{S}_\alpha$ are vertices, i.e., $\mathcal{S}_\alpha$ is lattice-free. 
\end{proof}

\end{document}